\newtheorem{theorem}{Theorem}
\newtheorem{axiom}[theorem]{Axiom}
\newtheorem{conjecture}[theorem]{Conjecture}
\newtheorem{corollary}[theorem]{Corollary}
\newtheorem{definition}[theorem]{Definition}
\newtheorem{example}[theorem]{Example}
\newtheorem{exercise}[theorem]{Exercise}
\newtheorem{lemma}[theorem]{Lemma}
\newtheorem{notation}[theorem]{Notation}
\newtheorem{proposition}[theorem]{Proposition}
\newtheorem{remark}[theorem]{Remark}
\newenvironment{proof}[1][Proof]{\noindent\textbf{#1.} }{\ \rule{0.5em}{0.5em}}
\let\pdfoutput=\undefined\fi
\chardef\@x10\chardef\@xv60
\def\tcitime{
\def\@time{%
  \@minute\time\@hour\@minute\divide\@hour\@xv
  \ifnum\@hour<\@x 0\fi\the\@hour:%
  \multiply\@hour\@xv\advance\@minute-\@hour
  \ifnum\@minute<\@x 0\fi\the\@minute
  }}%
\def\x@hyperref#1#2#3{%
   \catcode`\~ = 12
   \catcode`\$ = 12
   \catcode`\_ = 12
   \catcode`\# = 12
   \catcode`\& = 12
   \catcode`\% = 12
   \y@hyperref{#1}{#2}{#3}%
}
\def\y@hyperref#1#2#3#4{%
   #2\ref{#4}#3
   \catcode`\~ = 13
   \catcode`\$ = 3
   \catcode`\_ = 8
   \catcode`\# = 6
   \catcode`\& = 4
   \catcode`\% = 14
}
\def\QCTOpt[#1]#2{%
  \def\QCTOptB{#1}
  \def\QCTOptA{#2}
}
\def\QCTNOpt#1{%
  \def\QCTOptA{#1}
  \let\QCTOptB\empty
}
\def\Qct{%
  \@ifnextchar[{%
    \QCTOpt}{\QCTNOpt}
}
\def\QCBOpt[#1]#2{%
  \def\QCBOptB{#1}%
  \def\QCBOptA{#2}%
}
\def\QCBNOpt#1{%
  \def\QCBOptA{#1}%
  \let\QCBOptB\empty
}
\def\Qcb{%
  \@ifnextchar[{%
    \QCBOpt}{\QCBNOpt}%
}
\def\PrepCapArgs{%
  \ifx\QCBOptA\empty
    \ifx\QCTOptA\empty
      {}%
    \else
      \ifx\QCTOptB\empty
        {\QCTOptA}%
      \else
        [\QCTOptB]{\QCTOptA}%
      \fi
    \fi
  \else
    \ifx\QCBOptA\empty
      {}%
    \else
      \ifx\QCBOptB\empty
        {\QCBOptA}%
      \else
        [\QCBOptB]{\QCBOptA}%
      \fi
    \fi
  \fi
}
\def\GRAPHICSPS#1{%
 \ifcase\GRAPHICSTYPE
   \special{ps: #1}%
 \or
   \special{language "PS", include "#1"}%
 \fi
}%
\def\graffile#1#2#3#4{%
    \bgroup
	   \@inlabelfalse
       \leavevmode
       \@ifundefined{bbl@deactivate}{\def~{\string~}}{\activesoff}%
        \raise -#4 \BOXTHEFRAME{%
           \hbox to #2{\raise #3\hbox to #2{\null #1\hfil}}}%
    \egroup
}%
\def\draftbox#1#2#3#4{%
 \leavevmode\raise -#4 \hbox{%
  \frame{\rlap{\protect\tiny #1}\hbox to #2%
   {\vrule height#3 width\z@ depth\z@\hfil}%
  }%
 }%
}%
\let\nographics=\@msidraft
\newif\ifwasdraft
\def\GRAPHIC#1#2#3#4#5{%
   \ifnum\@msidraft=\@ne\draftbox{#2}{#3}{#4}{#5}%
   \else\graffile{#1}{#3}{#4}{#5}%
   \fi
}
\def\addtoLaTeXparams#1{%
    \edef\LaTeXparams{\LaTeXparams #1}}%
\newif\ifBoxFrame \BoxFramefalse
\newif\ifOverFrame \OverFramefalse
\newif\ifUnderFrame \UnderFramefalse
\def\BOXTHEFRAME#1{%
   \hbox{%
      \ifBoxFrame
         \frame{#1}%
      \else
         {#1}%
      \fi
   }%
}
\def\doFRAMEparams#1{\BoxFramefalse\OverFramefalse\UnderFramefalse\readFRAMEparams#1\end}%
\def\readFRAMEparams#1{%
 \ifx#1\end%
  \let\next=\relax
  \else
  \ifx#1i\dispkind=\z@\fi
  \ifx#1d\dispkind=\@ne\fi
  \ifx#1f\dispkind=\tw@\fi
  \ifx#1t\addtoLaTeXparams{t}\fi
  \ifx#1b\addtoLaTeXparams{b}\fi
  \ifx#1p\addtoLaTeXparams{p}\fi
  \ifx#1h\addtoLaTeXparams{h}\fi
  \ifx#1X\BoxFrametrue\fi
  \ifx#1O\OverFrametrue\fi
  \ifx#1U\UnderFrametrue\fi
  \ifx#1w
    \ifnum\@msidraft=1\wasdrafttrue\else\wasdraftfalse\fi
    \@msidraft=\@ne
  \fi
  \let\next=\readFRAMEparams
  \fi
 \next
 }%
\def\IFRAME#1#2#3#4#5#6{%
      \bgroup
      \let\QCTOptA\empty
      \let\QCTOptB\empty
      \let\QCBOptA\empty
      \let\QCBOptB\empty
      #6%
      \parindent=0pt
      \leftskip=0pt
      \rightskip=0pt
      \setbox0=\hbox{\QCBOptA}%
      \@tempdima=#1\relax
      \ifOverFrame
          \typeout{This is not implemented yet}%
          \show\HELP
      \else
         \ifdim\wd0>\@tempdima
            \advance\@tempdima by \@tempdima
            \ifdim\wd0 >\@tempdima
               \setbox1 =\vbox{%
                  \unskip\hbox to \@tempdima{\hfill\GRAPHIC{#5}{#4}{#1}{#2}{#3}\hfill}%
                  \unskip\hbox to \@tempdima{\parbox[b]{\@tempdima}{\QCBOptA}}%
               }%
               \wd1=\@tempdima
            \else
               \textwidth=\wd0
               \setbox1 =\vbox{%
                 \noindent\hbox to \wd0{\hfill\GRAPHIC{#5}{#4}{#1}{#2}{#3}\hfill}\\%
                 \noindent\hbox{\QCBOptA}%
               }%
               \wd1=\wd0
            \fi
         \else
            \ifdim\wd0>0pt
              \hsize=\@tempdima
              \setbox1=\vbox{%
                \unskip\GRAPHIC{#5}{#4}{#1}{#2}{0pt}%
                \break
                \unskip\hbox to \@tempdima{\hfill \QCBOptA\hfill}%
              }%
              \wd1=\@tempdima
           \else
              \hsize=\@tempdima
              \setbox1=\vbox{%
                \unskip\GRAPHIC{#5}{#4}{#1}{#2}{0pt}%
              }%
              \wd1=\@tempdima
           \fi
         \fi
         \@tempdimb=\ht1
         \advance\@tempdimb by -#2
         \advance\@tempdimb by #3
         \leavevmode
         \raise -\@tempdimb \hbox{\box1}%
      \fi
      \egroup%
}%
\def\DFRAME#1#2#3#4#5{%
  \vspace\topsep
  \hfil\break
  \bgroup
     \leftskip\@flushglue
	 \rightskip\@flushglue
	 \parindent\z@
	 \parfillskip\z@skip
     \let\QCTOptA\empty
     \let\QCTOptB\empty
     \let\QCBOptA\empty
     \let\QCBOptB\empty
	 \vbox\bgroup
        \ifOverFrame 
           #5\QCTOptA\par
        \fi
        \GRAPHIC{#4}{#3}{#1}{#2}{\z@}%
        \ifUnderFrame 
           \break#5\QCBOptA
        \fi
	 \egroup
  \egroup
  \vspace\topsep
  \break
}%
\def\FFRAME#1#2#3#4#5#6#7{%
  \@ifundefined{floatstyle}
    {
     \begin{figure}[#1]%
    }
    {
	 \ifx#1h
      \begin{figure}[H]%
	 \else
      \begin{figure}[#1]%
	 \fi
	}
  \let\QCTOptA\empty
  \let\QCTOptB\empty
  \let\QCBOptA\empty
  \let\QCBOptB\empty
  \ifOverFrame
    #4
    \ifx\QCTOptA\empty
    \else
      \ifx\QCTOptB\empty
        \caption{\QCTOptA}%
      \else
        \caption[\QCTOptB]{\QCTOptA}%
      \fi
    \fi
    \ifUnderFrame\else
      \label{#5}%
    \fi
  \else
    \UnderFrametrue%
  \fi
  \begin{center}\GRAPHIC{#7}{#6}{#2}{#3}{\z@}\end{center}%
  \ifUnderFrame
    #4
    \ifx\QCBOptA\empty
      \caption{}%
    \else
      \ifx\QCBOptB\empty
        \caption{\QCBOptA}%
      \else
        \caption[\QCBOptB]{\QCBOptA}%
      \fi
    \fi
    \label{#5}%
  \fi
  \end{figure}%
 }%
\def\makeactives{
  \catcode`\"=\active
  \catcode`\;=\active
  \catcode`\:=\active
  \catcode`\'=\active
  \catcode`\~=\active
}
   \gdef\activesoff{%
      \def"{\string"}%
      \def;{\string;}%
      \def:{\string:}%
      \def'{\string'}%
      \def~{\string~}%
    }
\def\FRAME#1#2#3#4#5#6#7#8{%
 \bgroup
 \ifnum\@msidraft=\@ne
   \wasdrafttrue
 \else
   \wasdraftfalse%
 \fi
 \def\LaTeXparams{}%
 \dispkind=\z@
 \def\LaTeXparams{}%
 \doFRAMEparams{#1}%
 \ifnum\dispkind=\z@\IFRAME{#2}{#3}{#4}{#7}{#8}{#5}\else
  \ifnum\dispkind=\@ne\DFRAME{#2}{#3}{#7}{#8}{#5}\else
   \ifnum\dispkind=\tw@
    \edef\@tempa{\noexpand\FFRAME{\LaTeXparams}}%
    \@tempa{#2}{#3}{#5}{#6}{#7}{#8}%
    \fi
   \fi
  \fi
  \ifwasdraft\@msidraft=1\else\@msidraft=0\fi{}%
  \egroup
 }%
\def\TEXUX#1{"texux"}
\def\limfunc#1{\mathop{\rm #1}}%
\long\def\QQQ#1#2{%
     \long\expandafter\def\csname#1\endcsname{#2}}%
\long\def\QQA#1#2{}%
\def\QTR#1#2{{\csname#1\endcsname {#2}}}%
\def\EXPAND#1[#2]#3{}%
\def\NOEXPAND#1[#2]#3{}%
\def\LaTeXparent#1{}%
\def\ChildStyles#1{}%
\def\ChildDefaults#1{}%
\def\QTagDef#1#2#3{}%
  \providecommand{\UNICODE}[2][]{\protect\rule{.1in}{.1in}}
  \providecommand{\U}[1]{\protect\rule{.1in}{.1in}}
\def\QQfnmark#1{\footnotemark}
 \def\abstract{%
  \if@twocolumn
   \section*{Abstract (Not appropriate in this style!)}%
   \else \small 
   \begin{center}{\bf Abstract\vspace{-.5em}\vspace{\z@}}\end{center}%
   \quotation 
   \fi
  }%
   \def\registered{\relax\ifmmode{}\r@gistered
                    \else$\m@th\r@gistered$\fi}%
 \def\r@gistered{^{\ooalign
  {\hfil\raise.07ex\hbox{$\scriptstyle\rm\text{R}$}\hfil\crcr
  \mathhexbox20D}}}}{}%
\newdimen\theight
\def\newfmtname{LaTeX2e}
  \DeclareOldFontCommand{\rm}{\normalfont\rmfamily}{\mathrm}
  \DeclareOldFontCommand{\sf}{\normalfont\sffamily}{\mathsf}
  \DeclareOldFontCommand{\tt}{\normalfont\ttfamily}{\mathtt}
  \DeclareOldFontCommand{\bf}{\normalfont\bfseries}{\mathbf}
  \DeclareOldFontCommand{\it}{\normalfont\itshape}{\mathit}
  \DeclareOldFontCommand{\sl}{\normalfont\slshape}{\@nomath\sl}
  \DeclareOldFontCommand{\sc}{\normalfont\scshape}{\@nomath\sc}
\def\alpha{{\Greekmath 010B}}%
\def\beta{{\Greekmath 010C}}%
\def\gamma{{\Greekmath 010D}}%
\def\delta{{\Greekmath 010E}}%
\def\epsilon{{\Greekmath 010F}}%
\def\zeta{{\Greekmath 0110}}%
\def\eta{{\Greekmath 0111}}%
\def\theta{{\Greekmath 0112}}%
\def\iota{{\Greekmath 0113}}%
\def\kappa{{\Greekmath 0114}}%
\def\lambda{{\Greekmath 0115}}%
\def\mu{{\Greekmath 0116}}%
\def\nu{{\Greekmath 0117}}%
\def\xi{{\Greekmath 0118}}%
\def\pi{{\Greekmath 0119}}%
\def\rho{{\Greekmath 011A}}%
\def\sigma{{\Greekmath 011B}}%
\def\tau{{\Greekmath 011C}}%
\def\upsilon{{\Greekmath 011D}}%
\def\phi{{\Greekmath 011E}}%
\def\chi{{\Greekmath 011F}}%
\def\psi{{\Greekmath 0120}}%
\def\omega{{\Greekmath 0121}}%
\def\varepsilon{{\Greekmath 0122}}%
\def\vartheta{{\Greekmath 0123}}%
\def\varpi{{\Greekmath 0124}}%
\def\varrho{{\Greekmath 0125}}%
\def\varsigma{{\Greekmath 0126}}%
\def\varphi{{\Greekmath 0127}}%
\def\nabla{{\Greekmath 0272}}
\def\FindBoldGroup{%
   {\setbox0=\hbox{$\mathbf{x\global\edef\theboldgroup{\the\mathgroup}}$}}%
}
\def\Greekmath#1#2#3#4{%
    \if@compatibility
        \ifnum\mathgroup=\symbold
           \mathchoice{\mbox{\boldmath$\displaystyle\mathchar"#1#2#3#4$}}%
                      {\mbox{\boldmath$\textstyle\mathchar"#1#2#3#4$}}%
                      {\mbox{\boldmath$\scriptstyle\mathchar"#1#2#3#4$}}%
                      {\mbox{\boldmath$\scriptscriptstyle\mathchar"#1#2#3#4$}}%
        \else
           \mathchar"#1#2#3#4%
        \fi 
    \else 
        \FindBoldGroup
        \ifnum\mathgroup=\theboldgroup 
           \mathchoice{\mbox{\boldmath$\displaystyle\mathchar"#1#2#3#4$}}%
                      {\mbox{\boldmath$\textstyle\mathchar"#1#2#3#4$}}%
                      {\mbox{\boldmath$\scriptstyle\mathchar"#1#2#3#4$}}%
                      {\mbox{\boldmath$\scriptscriptstyle\mathchar"#1#2#3#4$}}%
        \else
           \mathchar"#1#2#3#4%
        \fi     	    
	  \fi}
\newif\ifGreekBold  \GreekBoldfalse
\let\SAVEPBF=\pbf
\def\pbf{\GreekBoldtrue\SAVEPBF}%
  \newcounter{equationnumber}  
  \def\mathletters{%
     \addtocounter{equation}{1}
     \edef\@currentlabel{\theequation}%
     \setcounter{equationnumber}{\c@equation}
     \setcounter{equation}{0}%
     \edef\theequation{\@currentlabel\noexpand\alph{equation}}%
  }
    \def\BibTeX{{\rm B\kern-.05em{\sc i\kern-.025em b}\kern-.08em
                 T\kern-.1667em\lower.7ex\hbox{E}\kern-.125emX}}}{}%
\def\AmS{{\protect\usefont{OMS}{cmsy}{m}{n}%
                A\kern-.1667em\lower.5ex\hbox{M}\kern-.125emS}}}{}%
\def\@@eqncr{\let\@tempa\relax
    \ifcase\@eqcnt \def\@tempa{& & &}\or \def\@tempa{& &}%
      \else \def\@tempa{&}\fi
     \@tempa
     \if@eqnsw
        \iftag@
           \@taggnum
        \else
           \@eqnnum\stepcounter{equation}%
        \fi
     \fi
     \global\tag@false
     \global\@eqnswtrue
     \global\@eqcnt\z@\cr}
\def\TCItag{\@ifnextchar*{\@TCItagstar}{\@TCItag}}
\def\@TCItag#1{%
    \global\tag@true
    \global\def\@taggnum{(#1)}%
    \global\def\@currentlabel{#1}}
\def\@TCItagstar*#1{%
    \global\tag@true
    \global\def\@taggnum{#1}%
    \global\def\@currentlabel{#1}}
\def\tint{\msi@int\textstyle\int}%
\def\tiint{\msi@int\textstyle\iint}%
\def\tiiint{\msi@int\textstyle\iiint}%
\def\tiiiint{\msi@int\textstyle\iiiint}%
\def\tidotsint{\msi@int\textstyle\idotsint}%
\def\toint{\msi@int\textstyle\oint}%
\newtoks\temptoksa
\newtoks\temptoksb
\newtoks\temptoksc
\def\msi@int#1#2{%
 \def\@temp{{#1#2\the\temptoksc_{\the\temptoksa}^{\the\temptoksb}}}%
 \futurelet\@nextcs
 \@int
}
\def\@int{%
   \ifx\@nextcs\limits
      \typeout{Found limits}%
      \temptoksc={\limits}%
	  \let\@next\@intgobble%
   \else\ifx\@nextcs\nolimits
      \typeout{Found nolimits}%
      \temptoksc={\nolimits}%
	  \let\@next\@intgobble%
   \else
      \typeout{Did not find limits or no limits}%
      \temptoksc={}%
      \let\@next\msi@limits%
   \fi\fi
   \@next   
}%
\def\@intgobble#1{%
   \typeout{arg is #1}%
   \msi@limits
}
\def\msi@limits{%
   \temptoksa={}%
   \temptoksb={}%
   \@ifnextchar_{\@limitsa}{\@limitsb}%
}
\def\@limitsa_#1{%
   \temptoksa={#1}%
   \@ifnextchar^{\@limitsc}{\@temp}%
}
\def\@limitsb{%
   \@ifnextchar^{\@limitsc}{\@temp}%
}
\def\@limitsc^#1{%
   \temptoksb={#1}%
   \@ifnextchar_{\@limitsd}{\@temp}%
}
\def\@limitsd_#1{%
   \temptoksa={#1}%
   \@temp
}
\def\dint{\msi@int\displaystyle\int}%
\def\diint{\msi@int\displaystyle\iint}%
\def\diiint{\msi@int\displaystyle\iiint}%
\def\diiiint{\msi@int\displaystyle\iiiint}%
\def\didotsint{\msi@int\displaystyle\idotsint}%
\def\doint{\msi@int\displaystyle\oint}%
\def\ExitTCILatex{\makeatother }
\if@compatibility\message{amsmath already loaded}\fi\aftergroup\ExitTCILatex}
\if@compatibility\message{amstex already loaded}\fi\aftergroup\ExitTCILatex}
\if@compatibility\message{amsgen already loaded}\fi\aftergroup\ExitTCILatex}
\let\DOTSI\relax
\def\RIfM@{\relax\ifmmode}%
\def\FN@{\futurelet\next}%
\def\iint{\DOTSI\intno@\tw@\FN@\ints@}%
\def\iiint{\DOTSI\intno@\thr@@\FN@\ints@}%
\def\iiiint{\DOTSI\intno@4 \FN@\ints@}%
\def\idotsint{\DOTSI\intno@\z@\FN@\ints@}%
\def\ints@{\findlimits@\ints@@}%
\newif\iflimtoken@
\newif\iflimits@
\def\findlimits@{\limtoken@true\ifx\next\limits\limits@true
 \else\ifx\next\nolimits\limits@false\else
 \limtoken@false\ifx\ilimits@\nolimits\limits@false\else
 \ifinner\limits@false\else\limits@true\fi\fi\fi\fi}%
\def\multint@{\int\ifnum\intno@=\z@\intdots@                          
 \else\intkern@\fi                                                    
 \ifnum\intno@>\tw@\int\intkern@\fi                                   
 \ifnum\intno@>\thr@@\int\intkern@\fi                                 
 \int}
\def\multintlimits@{\intop\ifnum\intno@=\z@\intdots@\else\intkern@\fi
 \ifnum\intno@>\tw@\intop\intkern@\fi
 \ifnum\intno@>\thr@@\intop\intkern@\fi\intop}%
\def\intic@{%
    \mathchoice{\hskip.5em}{\hskip.4em}{\hskip.4em}{\hskip.4em}}%
\def\negintic@{\mathchoice
 {\hskip-.5em}{\hskip-.4em}{\hskip-.4em}{\hskip-.4em}}%
\def\ints@@{\iflimtoken@                                              
 \def\ints@@@{\iflimits@\negintic@
   \mathop{\intic@\multintlimits@}\limits                             
  \else\multint@\nolimits\fi                                          
  \eat@}
 \else                                                                
 \def\ints@@@{\iflimits@\negintic@
  \mathop{\intic@\multintlimits@}\limits\else
  \multint@\nolimits\fi}\fi\ints@@@}%
\def\intkern@{\mathchoice{\!\!\!}{\!\!}{\!\!}{\!\!}}%
\def\plaincdots@{\mathinner{\cdotp\cdotp\cdotp}}%
\def\intdots@{\mathchoice{\plaincdots@}%
 {{\cdotp}\mkern1.5mu{\cdotp}\mkern1.5mu{\cdotp}}%
 {{\cdotp}\mkern1mu{\cdotp}\mkern1mu{\cdotp}}%
 {{\cdotp}\mkern1mu{\cdotp}\mkern1mu{\cdotp}}}%
\def\RIfM@{\relax\protect\ifmmode}
\def\text{\RIfM@\expandafter\text@\else\expandafter\mbox\fi}
\let\nfss@text\text
\def\text@#1{\mathchoice
   {\textdef@\displaystyle\f@size{#1}}%
   {\textdef@\textstyle\tf@size{\firstchoice@false #1}}%
   {\textdef@\textstyle\sf@size{\firstchoice@false #1}}%
   {\textdef@\textstyle \ssf@size{\firstchoice@false #1}}%
   \glb@settings}
\def\textdef@#1#2#3{\hbox{{%
                    \everymath{#1}%
                    \let\f@size#2\selectfont
                    #3}}}
\newif\iffirstchoice@
\def\Let@{\relax\iffalse{\fi\let\\=\cr\iffalse}\fi}%
\def\vspace@{\def\vspace##1{\crcr\noalign{\vskip##1\relax}}}%
\def\multilimits@{\bgroup\vspace@\Let@
 \baselineskip\fontdimen10 \scriptfont\tw@
 \advance\baselineskip\fontdimen12 \scriptfont\tw@
 \lineskip\thr@@\fontdimen8 \scriptfont\thr@@
 \lineskiplimit\lineskip
 \vbox\bgroup\ialign\bgroup\hfil$\m@th\scriptstyle{##}$\hfil\crcr}%
\def\Sb{_\multilimits@}%
\def\endSb{\crcr\egroup\egroup\egroup}%
\def\Sp{^\multilimits@}%
\newdimen\ex@
\def\rightarrowfill@#1{$#1\m@th\mathord-\mkern-6mu\cleaders
 \hbox{$#1\mkern-2mu\mathord-\mkern-2mu$}\hfill
 \mkern-6mu\mathord\rightarrow$}%
\def\leftarrowfill@#1{$#1\m@th\mathord\leftarrow\mkern-6mu\cleaders
 \hbox{$#1\mkern-2mu\mathord-\mkern-2mu$}\hfill\mkern-6mu\mathord-$}%
\def\leftrightarrowfill@#1{$#1\m@th\mathord\leftarrow
\mkern-6mu\cleaders
 \hbox{$#1\mkern-2mu\mathord-\mkern-2mu$}\hfill
 \mkern-6mu\mathord\rightarrow$}%
\def\overrightarrow{\mathpalette\overrightarrow@}%
\def\overrightarrow@#1#2{\vbox{\ialign{##\crcr\rightarrowfill@#1\crcr
 \noalign{\kern-\ex@\nointerlineskip}$\m@th\hfil#1#2\hfil$\crcr}}}%
\def\overleftarrow{\mathpalette\overleftarrow@}%
\def\overleftarrow@#1#2{\vbox{\ialign{##\crcr\leftarrowfill@#1\crcr
 \noalign{\kern-\ex@\nointerlineskip}$\m@th\hfil#1#2\hfil$\crcr}}}%
\def\overleftrightarrow{\mathpalette\overleftrightarrow@}%
\def\overleftrightarrow@#1#2{\vbox{\ialign{##\crcr
   \leftrightarrowfill@#1\crcr
 \noalign{\kern-\ex@\nointerlineskip}$\m@th\hfil#1#2\hfil$\crcr}}}%
\def\underrightarrow{\mathpalette\underrightarrow@}%
\def\underrightarrow@#1#2{\vtop{\ialign{##\crcr$\m@th\hfil#1#2\hfil
  $\crcr\noalign{\nointerlineskip}\rightarrowfill@#1\crcr}}}%
\def\underleftarrow{\mathpalette\underleftarrow@}%
\def\underleftarrow@#1#2{\vtop{\ialign{##\crcr$\m@th\hfil#1#2\hfil
  $\crcr\noalign{\nointerlineskip}\leftarrowfill@#1\crcr}}}%
\def\underleftrightarrow{\mathpalette\underleftrightarrow@}%
\def\underleftrightarrow@#1#2{\vtop{\ialign{##\crcr$\m@th
  \hfil#1#2\hfil$\crcr
 \noalign{\nointerlineskip}\leftrightarrowfill@#1\crcr}}}%
\def\qopnamewl@#1{\mathop{\operator@font#1}\nlimits@}
\let\nlimits@\displaylimits
\def\setboxz@h{\setbox\z@\hbox}
\def\varlim@#1#2{\mathop{\vtop{\ialign{##\crcr
 \hfil$#1\m@th\operator@font lim$\hfil\crcr
 \noalign{\nointerlineskip}#2#1\crcr
 \noalign{\nointerlineskip\kern-\ex@}\crcr}}}}
 \def\rightarrowfill@#1{\m@th\setboxz@h{$#1-$}\ht\z@\z@
  $#1\copy\z@\mkern-6mu\cleaders
  \hbox{$#1\mkern-2mu\box\z@\mkern-2mu$}\hfill
  \mkern-6mu\mathord\rightarrow$}
\def\leftarrowfill@#1{\m@th\setboxz@h{$#1-$}\ht\z@\z@
  $#1\mathord\leftarrow\mkern-6mu\cleaders
  \hbox{$#1\mkern-2mu\copy\z@\mkern-2mu$}\hfill
  \mkern-6mu\box\z@$}
\def\projlim{\qopnamewl@{proj\,lim}}
\def\injlim{\qopnamewl@{inj\,lim}}
\def\varinjlim{\mathpalette\varlim@\rightarrowfill@}
\def\varprojlim{\mathpalette\varlim@\leftarrowfill@}
\def\varliminf{\mathpalette\varliminf@{}}
\def\varliminf@#1{\mathop{\underline{\vrule\@depth.2\ex@\@width\z@
   \hbox{$#1\m@th\operator@font lim$}}}}
\def\varlimsup{\mathpalette\varlimsup@{}}
\def\varlimsup@#1{\mathop{\overline
  {\hbox{$#1\m@th\operator@font lim$}}}}
\def\align{\@verbatim \frenchspacing\@vobeyspaces \@alignverbatim
You are using the "align" environment in a style in which it is not defined.}
\let\csname endalign*\endcsname =\endtrivlist
\def\alignat{\@verbatim \frenchspacing\@vobeyspaces \@alignatverbatim
You are using the "alignat" environment in a style in which it is not defined.}
\let\csname endalignat*\endcsname =\endtrivlist
\def\xalignat{\@verbatim \frenchspacing\@vobeyspaces \@xalignatverbatim
You are using the "xalignat" environment in a style in which it is not defined.}
\let\csname endxalignat*\endcsname =\endtrivlist
\def\gather{\@verbatim \frenchspacing\@vobeyspaces \@gatherverbatim
You are using the "gather" environment in a style in which it is not defined.}
\let\csname endgather*\endcsname =\endtrivlist
\def\multiline{\@verbatim \frenchspacing\@vobeyspaces \@multilineverbatim
You are using the "multiline" environment in a style in which it is not defined.}
\let\csname endmultiline*\endcsname =\endtrivlist
\def\arrax{\@verbatim \frenchspacing\@vobeyspaces \@arraxverbatim
You are using a type of "array" construct that is only allowed in AmS-LaTeX.}
\def\tabulax{\@verbatim \frenchspacing\@vobeyspaces \@tabulaxverbatim
You are using a type of "tabular" construct that is only allowed in AmS-LaTeX.}
\let\csname endarrax*\endcsname =\endtrivlist
\let\csname endtabulax*\endcsname =\endtrivlist
 \def\endequation{%
     \ifmmode\ifinner 
      \iftag@
        \addtocounter{equation}{-1} 
        $\hfil
           \displaywidth\linewidth\@taggnum\egroup \endtrivlist
        \global\tag@false
        \global\@ignoretrue   
      \else
        $\hfil
           \displaywidth\linewidth\@eqnnum\egroup \endtrivlist
        \global\tag@false
        \global\@ignoretrue 
      \fi
     \else   
      \iftag@
        \addtocounter{equation}{-1} 
        \eqno \hbox{\@taggnum}
        \global\tag@false%
        $$\global\@ignoretrue
      \else
        \eqno \hbox{\@eqnnum}
        $$\global\@ignoretrue
      \fi
     \fi\fi
 } 
 \newif\iftag@ \tag@false
 \def\TCItag{\@ifnextchar*{\@TCItagstar}{\@TCItag}}
 \def\@TCItag#1{%
     \global\tag@true
     \global\def\@taggnum{(#1)}%
     \global\def\@currentlabel{#1}}
 \def\@TCItagstar*#1{%
     \global\tag@true
     \global\def\@taggnum{#1}%
     \global\def\@currentlabel{#1}}
     \def\tag{\@ifnextchar*{\@tagstar}{\@tag}}
     \def\@tag#1{%
         \global\tag@true
         \global\def\@taggnum{(#1)}}
     \def\@tagstar*#1{%
         \global\tag@true
         \global\def\@taggnum{#1}}
\begin{document}

\title{Taylor estimate for differential equations driven by $\Pi $-rough
paths}
\author{Danyu Yang}
\maketitle

\begin{abstract}
We obtain a remainder estimate for the truncated Taylor expansion for
differential equations driven by weakly geometric $\Pi $-rough paths for $%
\Pi =\left( p_{1},\cdots ,p_{k}\right) $, $p_{i}\geq 1$. When there exists $%
p\geq 1$ such that $p_{i}=pk_{i}^{-1}\ $for some $k_{i}\in \left\{ 1,\dots ,%
\left[ p\right] \right\} $, we obtain a refined Taylor remainder estimate
that contains a factorial decay component. The remainder estimates are in
the right order as they are comparable to the next term in the Taylor
expansion.
\end{abstract}

\section{Introduction}

Consider the differential equation%
\begin{equation*}
dy_{t}=\sum_{i=1}^{d}f_{i}\left( y_{t}\right) dx_{t}^{i},y_{0}=\xi ,
\end{equation*}%
where the driving path $x$ can be highly oscillating. In the seminal paper 
\cite{lyons1998differential}, Lyons builds the theory of rough paths. By
lifting $x$ to a group-valued path, the theory of rough paths establishes
the existence and uniqueness of the solution and demonstrates the continuity
of the solution with respect to the driving signal in rough path metric.
Successful applications of the theory include differential equations driven
by general stochastic processes, stochastic flow, large deviation principle,
densities for rough differential equations \cite{friz2010multidimensional}
etc.

Based on modified Euler estimates, Davie \cite{davie2008differential}
develops an alternative approach to rough paths theory when $p<3$ and
constructs examples to illustrate the sharpness of the results. By making
systematic use of sub-Riemannian geodesics, Friz and Victoir \cite%
{friz2008euler} obtain Euler estimates for rough differential equations when 
$p\geq 1$ and obtain strong remainder estimates for stochastic Taylor
expansions.

The driving rough path can be inhomogeneous \cite{lejay2006p,
friz2010multidimensional, gyurko2016differential}. The special and important
case of $\left( p,q\right) $-rough paths is studied by Lejay and Victoir 
\cite{lejay2006p} and by Friz and Victoir \cite[Chapter 12]%
{friz2010multidimensional}. By adapting Lyons' approach \cite%
{lyons1998differential}, Gyurk\'{o} \cite{gyurko2016differential} proves the
existence, uniqueness and continuity of the solution to differential
equations driven by geometric $\Pi $-rough paths (inhomogeneous geometric
rough paths).

Building on the well-posedness result by Gyurk\'{o} \cite%
{gyurko2016differential}, in Lemma \ref{Lemma Euler estimate} we obtain a
Taylor remainder estimate for differential equations driven by weakly
geometric $\Pi $-rough paths for $\Pi =\left( p_{1},\cdots ,p_{d}\right) $, $%
p_{i}\geq 1$. Apart from $p_{i}\geq 1$, there is no further restriction on $%
p_{i}$. The proof of Lemma \ref{Lemma Euler estimate} relies on an
inhomogeneous counterpart of the sub-Riemannian geodesic methodology
developed by Friz and Victoir \cite{friz2008euler} and the analytical
approach introduced by Davie \cite{davie2008differential}.

In the special case when there exists $p\geq 1$ such that $p_{i}=pk_{i}^{-1}$
for some $k_{i}\in \left\{ 1,\dots ,\left[ p\right] \right\} $, $i=1,2,\dots
,d$, we obtain in Theorem \ref{Theorem Taylor estimate} a refined Taylor
remainder estimate which contains a factorial decay component. Theorem \ref%
{Theorem Taylor estimate} can be applied to differential equations driven by
general stochastic processes with a drift term of finite $p\left[ p\right]
^{-1}$-variation almost surely. Theorem \ref{Theorem Taylor estimate} can
also be useful when dealing with differential equations driven by branched
rough paths and quasi-geometric rough paths which are isomorphic to weakly
geometric $\Pi $-rough paths \cite{boedihardjo2019isomorphism,
yang2022remainder, bellingeri2020quasi}. The proof of Theorem \ref{Theorem
Taylor estimate} is based on the neo-classical inequality \cite%
{lyons1998differential, hara2010fractional} and the backward mathematical
induction developed in \cite{boedihardjo2015uniform}.

Although a geometric $\Pi $-rough path can be viewed as a geometric $p$%
-rough path for sufficiently large $p$, doing so imposes unnecessary
requirements on the vector field. In this paper, the sub-Riemannian
geometrical tools and the analytical methods are carefully adapted to the
inhomogeneous setting to accommodate the reduced regularity of the vector
field. The remainder estimates in Lemma \ref{Lemma Euler estimate} and
Theorem \ref{Theorem Taylor estimate} are in the right order as they are
comparable to the next term in the Taylor expansion.

\section{Notations and Results}

\begin{notation}
Fix real numbers $p_{i}\geq 1$, $i=1,\dots ,d$. Denote $\Pi :=\left(
p_{1},\cdots ,p_{d}\right) $ and denote $p_{\max }:=\max_{i=1}^{d}p_{i}$.
\end{notation}

\begin{notation}
Let $W$ denote the set of finite sequences of elements in $\left\{ 1,2,\dots
,d\right\} $ including the empty sequence denoted as $\varepsilon $. Set $%
\left\vert \varepsilon \right\vert :=0$. For $w=i_{1}\cdots i_{m}\in W$,
denote 
\begin{equation*}
\left\vert w\right\vert :=\frac{1}{p_{i_{1}}}+\cdots +\frac{1}{p_{i_{m}}}.
\end{equation*}
\end{notation}

\begin{notation}
Let $G_{\Pi }$ denote the set of mappings $a:\left\{ w\in W|\left\vert
w\right\vert \leq 1\right\} \rightarrow 
\mathbb{R}
$ that satisfy%
\begin{equation*}
\left( a,w_{1}\right) \left( a,w_{2}\right) =\left( a,sh\left( w_{1}\otimes
w_{2}\right) \right)
\end{equation*}%
for every $w_{1},w_{2}\in W$, $\left\vert w_{1}\right\vert +\left\vert
w_{2}\right\vert \leq 1$, where the shuffle product $sh$ is defined in \cite[%
Section 1.4]{reutenauer1993free}.
\end{notation}

For $a,b\in G_{\Pi }$ and $w\in W,\left\vert w\right\vert \leq 1$, define $%
\left( ab,w\right) =\sum_{uv=w}\left( a,u\right) \left( b,v\right) $ with $%
uv $ denoting the concatenation of $u$ with $v$. Then $G_{\Pi }$ is a group 
\cite[Theorem 3.2, Corollary 3.3]{reutenauer1993free}.

\begin{notation}
\label{Notation control}For $X:\left[ 0,T\right] \rightarrow G_{\Pi }$,
denote $X_{s,t}:=X_{s}^{-1}X_{t}$ and define%
\begin{equation*}
\omega _{X}\left( s,t\right) :=\sum_{w\in W,0<\left\vert w\right\vert \leq
1}\left( \sup_{s=t_{0}<t_{1}<\cdots <t_{n}=t,n\geq
1}\sum_{k=0}^{n-1}\left\vert \left( X_{t_{k},t_{k+1}},w\right) \right\vert ^{%
\frac{1}{\left\vert w\right\vert }}\right) \text{.}
\end{equation*}
\end{notation}

\begin{notation}
$X:\left[ 0,T\right] \rightarrow G_{\Pi }$ is a weakly geometric $\Pi $%
-rough path if $X\ $is continuous and $\omega _{X}\left( 0,T\right) <\infty $%
.
\end{notation}

Suppose Lipschitz functions and norms are defined as in \cite[Definition
1.2.4]{lyons1998differential}.

\begin{notation}
\label{Notation Xbar}Suppose $X:\left[ 0,T\right] \rightarrow G_{\Pi }$ and $%
f_{i}:%
\mathbb{R}
^{n}\rightarrow 
\mathbb{R}
^{n}$ is $Lip\left( \gamma _{i}\right) $, $i=1,2,\dots ,d$. Define $\bar{X}:%
\left[ 0,T\right] \rightarrow G_{\Pi }$ as%
\begin{equation*}
\left( \bar{X}_{t},i_{1}\cdots i_{k}\right) =\left\Vert f_{i_{1}}\right\Vert
_{Lip\left( \gamma _{i_{1}}\right) }\cdots \left\Vert f_{i_{k}}\right\Vert
_{Lip\left( \gamma _{i_{k}}\right) }\left( X_{t},i_{1}\cdots i_{k}\right) .
\end{equation*}
\end{notation}

For $f,g,\varphi :%
\mathbb{R}
^{n}\rightarrow 
\mathbb{R}
^{n}$, denote $f\left( \varphi \right) :=\left( D\varphi \right) \!f$ and
define $\left( f\circ g\right) \left( \varphi \right) :=f\left( g\left(
\varphi \right) \right) $. Let $I$ denote the identity function on $%
\mathbb{R}
^{n}$.

\begin{notation}
For sufficiently smooth $f_{i}:%
\mathbb{R}
^{n}\rightarrow 
\mathbb{R}
^{n}$, $i=1,2,\dots ,d$ and $w=i_{1}\cdots i_{m}\in W$, denote $%
F^{w}:=\left( f_{i_{1}}\circ \cdots \circ f_{i_{m}}\right) \left( I\right) $%
. For the empty sequence $\varepsilon $, denote $F^{\varepsilon }:=I$.
\end{notation}

For $\bar{X}$ in Notation \ref{Notation Xbar}, recall $\omega _{\bar{X}}$ in
Notation \ref{Notation control}.

\begin{lemma}
\label{Lemma Euler estimate}Let $X$ be a weakly geometric $\Pi $-rough path
for $\Pi =\left( p_{1},\dots ,p_{d}\right) $, $p_{i}\geq 1$, and let $f_{i}:%
\mathbb{R}
^{n}\rightarrow 
\mathbb{R}
^{n}$ be $Lip\left( \gamma _{i}\right) $ for $\gamma _{i}>p_{\max }\left(
1-p_{i}^{-1}\right) +1,i=1,\dots ,d$. Let $y:\left[ 0,T\right] \rightarrow 
\mathbb{R}
^{n}$ denote the unique path-level solution of the rough differential
equation: 
\begin{equation*}
dy_{t}=\sum_{i=1}^{d}f_{i}\left( y_{t}\right) dX_{t}^{i},y_{0}=\xi \in 
\mathbb{R}
^{n}.
\end{equation*}%
Denote $\theta :=\min_{w\in W,\left\vert w\right\vert >1}\left\vert
w\right\vert $. Then there exists a constant $C$ that only depends on $%
p_{1},\cdots ,p_{d},d$ such that%
\begin{equation*}
\left\vert y_{t}-y_{s}-\sum_{w\in W,0<\left\vert w\right\vert \leq
1}F^{w}\left( y_{s}\right) \left( X_{s,t},w\right) \right\vert \leq C\left(
\omega _{\bar{X}}\left( s,t\right) ^{\theta }\wedge \omega _{\bar{X}}\left(
s,t\right) \right)
\end{equation*}%
for every $\,0\leq s\leq t\leq T$.
\end{lemma}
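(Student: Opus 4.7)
The plan is to combine Friz--Victoir's sub-Riemannian geodesic approximation, adapted to the inhomogeneous scaling $(p_1,\dots,p_d)$, with Davie's almost-multiplicative iteration. Since $\theta>1$, we have $\omega_{\bar X}(s,t)^{\theta}\le \omega_{\bar X}(s,t)$ whenever $\omega_{\bar X}(s,t)\le 1$, so the $\wedge$ in the statement simply switches regimes. On intervals with $\omega_{\bar X}(s,t)\ge 1$ the bound is essentially trivial: using the well-posedness result of Gyurk\'o and the definition of $\omega_{\bar X}$, each term $|F^{w}(y_s)(X_{s,t},w)|$ for $|w|\le 1$ and $|y_t-y_s|$ itself are controlled by a constant times $\omega_{\bar X}(s,t)$, and the triangle inequality closes the argument. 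The substance of the lemma is therefore the local estimate on intervals where $\omega_{\bar X}(s,t)\le 1$.

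For the local step I would, following the idea of \cite{friz2008euler} but weighted by the $p_i$'s, construct for each fixed $[s,t]$ a smooth path $x^{s,t}:[s,t]\to\mathbb{R}^d$ whose $|w|\le 1$ truncated signature equals $X_{s,t}$ and whose $i$-th component has $p_i$-variation of order $\omega_X(s,t)^{1/p_i}$. Let $y^{s,t}$ be the classical ODE solution started at $y_s$ and driven by $x^{s,t}$. Iterated integration then produces a bona fide Taylor expansion
\[
 y^{s,t}_t - y_s - \sum_{w\in W,\,0<|w|\le 1} F^{w}(y_s)(X_{s,t},w)
 \;=\; \sum_{|w|>1,\,|w|\le 1+\eta} \int_{s<t_1<\cdots<t_m<t} F^{w}(\,\cdot\,)\,dx^{s,t,i_1}\cdots dx^{s,t,i_m},
\]
where $\eta$ can be taken small. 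The Lipschitz hypothesis $\gamma_i>p_{\max}(1-p_i^{-1})+1$ is exactly what is needed to interpret each $F^{w}$ with the correct regularity along each coordinate $i$ appearing in $w$, and the inhomogeneous loading $\omega_X(s,t)^{1/p_{i_j}}$ on each integration variable yields a bound of order $\omega_{\bar X}(s,t)^{|w|}\le \omega_{\bar X}(s,t)^{\theta}$ for every such $w$. This delivers the local Taylor estimate for $y^{s,t}_t-y_s$.

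To transfer this to the rough-path solution $y$, I would apply Davie's analytical scheme. Define the defect $J_{s,t}:= y_t-y_s-\sum_{0<|w|\le 1} F^{w}(y_s)(X_{s,t},w)$. Using Chen's identity for $X$ together with the Taylor expansion of $F^{w}(y_u)$ around $y_s$, one checks that $J$ is almost additive: for $s\le u\le t$,
\[
 |J_{s,t}-J_{s,u}-J_{u,t}| \;\le\; C\,\omega_{\bar X}(s,t)^{\theta}
\]
with $\omega_{\bar X}$ super-additive. The standard partition/refinement argument (as in \cite{davie2008differential} and \cite{friz2008euler}), combined with the local bound on $y^{s,t}_t-y_s$ just proved, then upgrades the pointwise local estimate to the global one $|J_{s,t}|\le C\,\omega_{\bar X}(s,t)^{\theta}$ on every subinterval with $\omega_{\bar X}(s,t)\le 1$.

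The main obstacle is the inhomogeneous bookkeeping. In the homogeneous setting, a single regularity exponent governs every direction, so the Euler-type remainder is straightforward to assemble. Here different coordinates carry different $\gamma_i$ and different $p_i$, and the derivatives of $F^{w}$ must be estimated using the mixed norms $\|f_{i_1}\|_{\mathrm{Lip}(\gamma_{i_1})}\cdots\|f_{i_m}\|_{\mathrm{Lip}(\gamma_{i_m})}$ in exact accordance with the weights $\omega_X(s,t)^{1/p_i}$ appearing in the geodesic. Checking that the threshold $\gamma_i>p_{\max}(1-p_i^{-1})+1$ really suffices for every word $w$ with $|w|$ slightly above $1$, and that the constants in the almost-additivity estimate can be chosen to depend only on $p_1,\dots,p_d$ and $d$, is the technical heart of the argument.
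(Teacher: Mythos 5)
Your outline has the right skeleton — geodesic approximation adapted to the weights $\omega(s,t)^{1/p_i}$, a Davie-type refinement, and a separate trivial argument when $\omega_{\bar X}(s,t)>1$ — and the local Taylor estimate for the ODE driven by the geodesic (your second paragraph) is essentially the paper's Lemma \ref{Lemma A} combined with Lemma \ref{Lemma control of geodesic}. The gap is in the transfer step. You propose to show directly that the defect $J_{s,t}=y_t-y_s-\sum_{0<|w|\le 1}F^w(y_s)(X_{s,t},w)$ of the \emph{rough-path} solution satisfies $|J_{s,t}-J_{s,u}-J_{u,t}|\le C\omega_{\bar X}(s,t)^{\theta}$ by "Taylor expansion of $F^w(y_u)$ around $y_s$." That expansion, to the accuracy you need, is itself an estimate of the form $\bigl|F^w(y_u)-F^w(y_s)-\sum_{l}F^{lw}(y_s)(X_{s,u},l)\bigr|\lesssim\omega(s,u)^{\theta-|w|}$ — i.e.\ an instance of the very lemma being proved (applied to $F^w\circ y$), so the argument as stated is circular; moreover $F^w$ is only $Lip(\eta)$ for some $\eta>1$ barely above $1$, so even the first-order remainder requires an a priori bound on $|y_u-y_s|$ that you have not yet established for the rough solution. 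Separately, the refinement argument needs a base case: for a genuine rough path there is no a priori reason why $\sum_j|J_{t_j,t_{j+1}}|$ tends to zero along refining partitions, and your "local bound on $y^{s,t}_t-y_s$" controls the geodesic ODE increment, not the discrepancy $y_t-y^{s,t}_t$ that actually has to be killed.

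The paper closes both holes by changing the object and the order of limits. It sets $\Gamma^{s,t}:=y_t-y^{s,t}_t$ (true solution minus geodesic ODE solution) and proves the almost-additivity
\begin{equation*}
\left\vert \Gamma ^{s,t}\right\vert \leq \left( 1+C_{2}\,\omega \left( s,t\right) ^{\frac{1}{p_{\max }}}\right) \left( \left\vert \Gamma ^{s,u}\right\vert +\left\vert \Gamma ^{u,t}\right\vert \right) +C_{1}\,\omega \left( s,t\right) ^{\theta },
\end{equation*}
where the multiplicative factor comes from a Gronwall comparison of ODE solutions with different initial conditions (Lemma \ref{Lemma B}) — a term your clean additive form omits. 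Crucially, this is done first for a continuous bounded-variation driver, where the dyadic refinement has the base case $|\Gamma^{t_j^n,t_{j+1}^n}|\le C\Vert x\Vert_{1-var,[t_j^n,t_{j+1}^n]}^2\to 0$; only afterwards does one pass to weakly geometric $\Pi$-rough paths via the universal limit theorem (viewing $X$ as a geometric $\Pi'$-rough path for slightly larger $p_i'$), using that the constant in the bounded-variation estimate depends continuously on $\Pi$. Without this detour through bounded-variation drivers, or some equivalent device (e.g.\ defining $y$ through an Euler scheme and invoking a sewing lemma), your iteration does not close.
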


The existence and uniqueness of the solution $y$ follows from \cite[Theorem
4.3]{gyurko2016differential}\cite[Lemma 3.5]{boedihardjo2019isomorphism}.
The Lipschitz condition on $f_{i}$ is encoded in the assumption of \cite[%
Theorem 4.3]{gyurko2016differential} that $h_{2}$ is $\limfunc{Lip}^{\Gamma
_{2},\Pi _{2}}$. In fact $\gamma _{i}-1=p_{\max }\bar{\gamma}_{i}$ where $%
\bar{\gamma}_{i}$ denotes the $\gamma _{i}$ in \cite[Theorem 4.3]%
{gyurko2016differential}. Lemma \ref{Lemma Euler estimate} serves as an
intermediate step to proving Theorem \ref{Theorem Taylor estimate}.

Let $\left[ p\right] $ denote the largest integer that is less or equal to $%
p $, and let $\lfloor \gamma \rfloor $ denote the largest integer that is
strictly less than $\gamma $.

\begin{theorem}
\label{Theorem Taylor estimate}For $p\geq 1$, let $X$ be a weakly geometric $%
\Pi $-rough path for $\Pi =\left( p_{1},\dots ,p_{d}\right) $ with $%
p_{i}=pk_{i}^{-1}$ for some $k_{i}\in \left\{ 1,\dots ,\left[ p\right]
\right\} $. Suppose $f_{i}:%
\mathbb{R}
^{n}\rightarrow 
\mathbb{R}
^{n}$ is $Lip\left( \gamma _{i}\right) $ for $\gamma _{i}>p_{\max }\left(
1-k_{i}p^{-1}\right) +1$. Let $y$ denote the unique path-level solution of
the rough differential equation%
\begin{equation*}
dy_{t}=\sum_{i=1}^{d}f_{i}\left( y_{t}\right) dX_{t}^{i},y_{0}=\xi \in 
\mathbb{R}
^{n}.
\end{equation*}%
Denote $\gamma :=\min_{i=1}^{d}\left\{ \left( \gamma _{i}-1\right) pp_{\max
}^{-1}+k_{i}\right\} $ and denote $N:=\lfloor \gamma \rfloor $. Then $N\geq %
\left[ p\right] $ and there exists a constant $C$ that only depends on $%
p_{1},\cdots ,p_{d},d$ such that%
\begin{equation}
\left\vert y_{t}-y_{s}-\sum_{w\in W,0<\left\vert w\right\vert \leq \frac{N}{p%
}}F^{w}\left( y_{s}\right) \left( X_{s,t},w\right) \right\vert \leq
CN!d^{N+1}\beta ^{N}\frac{\omega _{\bar{X}}\left( s,t\right) ^{\frac{N+1}{p}}%
}{\left( \frac{N+1}{p}\right) !}  \label{main estimate}
\end{equation}%
for every $\,0\leq s\leq t\leq T$, where%
\begin{equation*}
\beta :=p\left( 1+\sum_{k=2}^{\infty }\left( 2k^{-1}\right) ^{\frac{\left[ p%
\right] +1}{p}}\right) .
\end{equation*}
\end{theorem}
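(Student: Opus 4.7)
The plan is to prove the estimate by induction on the truncation level $n\in \{[p],[p]+1,\ldots ,N\}$, with Lemma \ref{Lemma Euler estimate} serving as the base case and with the neo-classical inequality invoked at each inductive step, following the backward induction approach of \cite{boedihardjo2015uniform}, so as to extract the factorial denominator $((N+1)/p)!$ together with the constant $N!\,d^{N+1}\beta ^{N}$. Before beginning the induction I would verify $N\geq [p]$: from $\gamma _{i}>p_{\max }(1-k_{i}/p)+1$ one has $(\gamma _{i}-1)pp_{\max }^{-1}+k_{i}>p$ for every $i$, so $\gamma >p$ and hence $N=\lfloor \gamma \rfloor \geq [p]$. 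Since $p_{i}=p/k_{i}$ with integer $k_{i}$, every length $|w|$ is of the form $m/p$ with $m\in \mathbb{N}$, and so $\{w:|w|\leq 1\}=\{w:|w|\leq [p]/p\}$ and $\theta \geq ([p]+1)/p$; this identifies Lemma \ref{Lemma Euler estimate} as the base case at level $n=[p]$. For each $n\in \{[p],\ldots ,N\}$ I would set
\begin{equation*}
R_{n}(s,t):=y_{t}-y_{s}-\sum_{0<|w|\leq n/p}F^{w}(y_{s})(X_{s,t},w),
\end{equation*}
and aim to prove by induction on $n$ that $|R_{n}(s,t)|\leq C_{n}\,\omega _{\bar{X}}(s,t)^{(n+1)/p}/((n+1)/p)!$ with $C_{n}$ of the form $C\,n!\,d^{n+1}\beta ^{n}$; the case $n=N$ is the theorem.

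The inductive step rests on an almost-multiplicative decomposition of $R_{n}$. For $s<u<t$, Chen's identity $(X_{s,t},w)=\sum_{u'v'=w}(X_{s,u},u')(X_{u,t},v')$ combined with a Taylor expansion of $F^{w}(y_{u})$ around $y_{s}$ (exploiting the identity $F^{u'}\circ F^{v'}=F^{u'v'}$) yields
\begin{equation*}
R_{n}(s,t)-R_{n}(s,u)-R_{n}(u,t)=E(s,u,t),
\end{equation*}
where $E(s,u,t)$ is a finite sum of terms consisting of a Taylor remainder of some $F^{w}$ paired with an increment $(X_{u,t},w)$, together with algebraic residues arising from Chen of joint length strictly greater than $n/p$. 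The Lipschitz hypothesis $\gamma _{i}>p_{\max }(1-k_{i}/p)+1$ is precisely what is needed to guarantee that every correction is controlled by $\omega _{\bar{X}}(s,u)^{\alpha _{1}}\omega _{\bar{X}}(u,t)^{\alpha _{2}}$ with $\alpha _{1}+\alpha _{2}\geq (n+2)/p$. Telescoping this decomposition over a sufficiently fine partition $s=t_{0}<\cdots <t_{m}=t$ and applying the Lyons--Hara--Hino neo-classical inequality to the resulting sum $\sum_{j}\omega _{\bar{X}}(t_{j},t_{j+1})^{(n+1)/p}/((n+1)/p)!$ then produces a global bound of order $\omega _{\bar{X}}(s,t)^{(n+2)/p}/((n+2)/p)!$, which upgrades the estimate from level $n$ to level $n+1$. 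Iterating the upgrade $N-[p]$ times delivers the theorem.

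The hard part will be the careful bookkeeping of constants through the $N-[p]$ iterations: the factor $N!$ arises from compounding the shifts in the factorial denominator at each step, the factor $d^{N+1}$ from summing over words at each level, and the factor $\beta ^{N}$ from the tail series $\sum_{k\geq 2}(2k^{-1})^{([p]+1)/p}$ that appears when the neo-classical inequality is applied to the partition correction. A secondary algebraic difficulty is to verify that Chen's identity for $X$ and the chain rule $F^{u'}\circ F^{v'}=F^{u'v'}$ combine into an almost-multiplicative identity compatible with the shuffle structure of $G_{\Pi }$ at every truncation level $n/p\leq N/p$, even though $G_{\Pi }$ is defined as a group only up to $|w|\leq 1$; this compatibility is what enables the corrections to telescope cleanly and admit bounds of the required order, and it is also the place where the inhomogeneous geometry of $\Pi $-rough paths interacts most delicately with the neo-classical summation.
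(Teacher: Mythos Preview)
Your forward induction on the truncation level $n$ does not close. Compute the defect $R_n(s,t)-R_n(s,u)-R_n(u,t)$ directly from Chen's identity: after regrouping one obtains
\[
\sum_{0<\|v\|\leq n}\Bigl[\,F^{v}(y_u)-F^{v}(y_s)-\!\!\sum_{0<\|u'\|\leq n-\|v\|}\!\!F^{u'v}(y_s)\,(X_{s,u},u')\,\Bigr](X_{u,t},v).
\]
To make each bracket small of order $\omega(s,u)^{(n-\|v\|+1)/p}$ you would need, for every nontrivial word $v$, a Taylor estimate of exactly the same refined type as the one you are trying to prove for $y_u-y_s$ itself. Your inductive hypothesis records only the case $v=\varepsilon$, so it gives no control over these brackets; a spatial Taylor expansion of $F^{v}$ does not help because the relevant identity is not $F^{u'}\circ F^{v}=F^{u'v}$ (this is false for function composition) but rather that the differential operator $f_{i_1}\circ\cdots\circ f_{i_j}$ applied to $F^{v}$ equals $F^{u'v}$, which is precisely why these brackets appear. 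Separately, the neo-classical inequality is not a statement about sums $\sum_j\omega(t_j,t_{j+1})^{(n+1)/p}$ over partitions; it is the binomial-type bound $\sum_{k}\frac{a^{k/p}}{(k/p)!}\frac{b^{(m-k)/p}}{((m-k)/p)!}\leq p\frac{(a+b)^{m/p}}{(m/p)!}$, used at a single subdivision.

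The paper repairs this by running the induction in the opposite direction: it fixes the top level $N$ and proves, by \emph{backward} induction on $\|w\|$ from $N$ down to $0$, that
\[
\Bigl|F^{w}(y_t)-F^{w}(y_s)-\!\!\sum_{1\leq\|l\|\leq N-\|w\|}\!\!F^{lw}(y_s)\,(X_{s,t},l)\Bigr|
\;\leq\; CN!\,\frac{\widetilde\omega(s,t)^{(N-\|w\|+1)/p}}{\beta\,\bigl((N-\|w\|+1)/p\bigr)!},
\]
with $\widetilde\omega:=\beta^{p}d^{p}\omega_{\bar X}$. The base range $\|w\|\in\{N-[p],\dots,N\}$ comes directly from the geodesic approximation and Lemma~\ref{Lemma Euler estimate}. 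At the inductive step the almost-additive defect for $w$ involves only the brackets for words $lw$ with $\|lw\|>\|w\|$, already controlled; the neo-classical inequality is then applied to the sum over $\|l\|$ (not over partition points) to merge $\widetilde\omega(s,u)^{(N-\|lw\|+1)/p}$ with $\widetilde\omega(u,t)^{\|l\|/p}$, and Lyons' point-removal argument closes the bound. Setting $w=\varepsilon$ at the end gives the theorem, and unwinding the rescaling $\widetilde\omega=\beta^{p}d^{p}\omega_{\bar X}$ produces the constants $d^{N+1}\beta^{N}$.
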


The estimate $\left( \ref{main estimate}\right) $ captures correctly the
order of the next term in the Taylor expansion: 
\begin{equation}
\sum_{w\in W,\left\vert w\right\vert =\frac{N+1}{p}}F^{w}\left( y_{s}\right)
\left( X_{s,t},w\right) .  \label{magnitude}
\end{equation}

1. The factor $N!$ in $\left( \ref{main estimate}\right) $ comes from
iterated composition of the vector field. Denote $f^{\circ \left( k+1\right)
}:=D\!\left( f^{\circ k}\right) \!f$ with $f^{\circ 1}:=f$. Let $f\left(
y\right) =e^{-y}\ $and consider the differential equation $%
dy_{t}=e^{-y_{t}}dt$, $y_{0}=0$. Then $\left\Vert f\right\Vert _{Lip\left(
k\right) }=1$ on $y\geq 0$ for $k=1,2,\dots $ and $\left\vert f^{\circ
\left( N+1\right) }\left( 0\right) \right\vert =N!$.

2. When $p_{i}=p$ ($X$ is homogeneous), there are total $d^{N+1}$ terms in $%
\left( \ref{magnitude}\right) $ which explains the factor $d^{N+1}$ in $%
\left( \ref{main estimate}\right) $.

3. Based on the neo-classical inequality \cite{hara2010fractional} and an
inhomogeneous analogue of \cite[Theorem 2.2.1]{lyons1998differential}, for $%
w\in W$, $\left\vert w\right\vert =\frac{N+1}{p}$, 
\begin{equation*}
\left\vert \left( \bar{X}_{s,t},w\right) \right\vert \leq \beta ^{N}\frac{%
\omega _{\bar{X}}\left( s,t\right) ^{\frac{N+1}{p}}}{\left( \frac{N+1}{p}%
\right) !}
\end{equation*}%
which explains the estimate $\left( \ref{main estimate}\right) $.

\section{Proofs}

Replace $X$ by $\bar{X}$ in Notation \ref{Notation Xbar} and replace $f_{i}$
by $\left\Vert f_{i}\right\Vert _{Lip\left( \gamma _{i}\right) }^{-1}f_{i}$
so that $\left\Vert f_{i}\right\Vert _{Lip\left( \gamma _{i}\right) }=1$.
The differential equation stays unchanged. Denote $\omega \left( s,t\right)
:=\omega _{\bar{X}}\left( s,t\right) $ as in Notation \ref{Notation control}%
. Constants in proofs may depend on $p_{1},\cdots ,p_{d},d$. The exact value
of constants may change.

\begin{notation}
For a continuous bounded variation path $x:\left[ 0,T\right] \rightarrow 
\mathbb{R}
^{d}$, denote $S_{\Pi }\left( x\right) :\left\{ \left( s,t\right) |0\leq
s\leq t\leq T\right\} \rightarrow G_{\Pi }$ as%
\begin{equation*}
\left( S_{\Pi }\left( x\right) _{s,t},i_{1}\cdots i_{m}\right)
=\idotsint\limits_{s<u_{1}<\cdots <u_{m}<t}dx_{u_{1}}^{i_{1}}\cdots
dx_{u_{m}}^{i_{m}}
\end{equation*}%
for $i_{1}\cdots i_{m}\in W$, $\left\vert i_{1}\cdots i_{m}\right\vert \leq
1 $.
\end{notation}

\begin{definition}
For $a\in G_{\Pi }$, define%
\begin{equation*}
\left\Vert a\right\Vert ^{\prime }:=\inf_{x}\sum_{i=1}^{d}\left\Vert
x^{i}\right\Vert _{1-var,\left[ 0,1\right] }^{\frac{p_{i}}{p_{\max }}}
\end{equation*}%
where the infimum is taken over all continuous bounded variation paths $x:%
\left[ 0,1\right] \rightarrow 
\mathbb{R}
^{d}$ that satisfy%
\begin{equation*}
\left( S_{\Pi }\left( x\right) _{0,1},w\right) =\left( a,w\right) \text{ for
every }w\in W,\left\vert w\right\vert \leq 1\text{.}
\end{equation*}%
The infimum can be achieved at a continuous bounded variation path which is
called a \emph{geodesic} associated with $a\in G_{\Pi }$.
\end{definition}

The existence of geodesic can be proved similarly to \cite[Theorem 7.32]%
{friz2010multidimensional}. Since $p_{i}p_{\max }^{-1}\leq 1$, the
sub-additivity and continuity of $\left\Vert \cdot \right\Vert ^{\prime }$
can be proved similarly to \cite[Proposition 7.40]{friz2010multidimensional}%
. For $a\in G_{\Pi }$, define%
\begin{equation*}
\left\Vert a\right\Vert :=\sum_{w\in W,0<\left\vert w\right\vert \leq
1}\left\vert \left( a,w\right) \right\vert ^{\frac{1}{p_{\max }\left\vert
w\right\vert }}.
\end{equation*}%
For $\lambda >0$ and $a\in G_{\Pi }$, define $\delta _{\lambda }a\in G_{\Pi
} $ as 
\begin{equation*}
\left( \delta _{\lambda }a,w\right) :=\lambda ^{p_{\max }\left\vert
w\right\vert }\left( a,w\right) \text{ for every }w\in W,\left\vert
w\right\vert \leq 1\text{.}
\end{equation*}%
Both $\left\Vert \cdot \right\Vert ^{\prime }$ and $\left\Vert \cdot
\right\Vert $ are continuous homogeneous norms with respect to $\delta
_{\lambda }$. By arguments similar to \cite[Theorem 7.44]%
{friz2010multidimensional}, $\left\Vert \cdot \right\Vert ^{\prime }$ and $%
\left\Vert \cdot \right\Vert $ are equivalent up to a constant depending on $%
p_{1},\cdots ,p_{d},d$.

\begin{lemma}
\label{Lemma control of geodesic}Suppose $X:\left[ 0,T\right] \rightarrow
G_{\Pi }\ $is a weakly geometric $\Pi $-rough path. For $0\leq s<t\leq T$,
let $x^{s,t}=\left( x^{s,t,1},\dots ,x^{s,t,d}\right) :\left[ s,t\right]
\rightarrow 
\mathbb{R}
^{d}$ be a geodesic associated with $X_{s,t}$. Then there exists a constant $%
C$ depending on $p_{1},\cdots ,p_{d},d$ such that for $i=1,\dots ,d$,%
\begin{equation*}
\left\Vert x^{s,t,i}\right\Vert _{1-var,\left[ s,t\right] }\leq C\omega
\left( s,t\right) ^{\frac{1}{p_{i}}}.
\end{equation*}
\end{lemma}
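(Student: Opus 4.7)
The plan is to reduce the claim to the already-established equivalence between the two homogeneous norms $\|\cdot\|$ and $\|\cdot\|'$ on $G_\Pi$. The key observation is that $\omega$ dominates $\|X_{s,t}\|^{p_{\max}}$ up to a constant: by taking the trivial partition $\{s,t\}$ in Notation \ref{Notation control}, every $w\in W$ with $0<|w|\le 1$ satisfies
\begin{equation*}
|(X_{s,t},w)|^{1/|w|}\le \omega(s,t),
\end{equation*}
hence $|(X_{s,t},w)|^{1/(p_{\max}|w|)}\le \omega(s,t)^{1/p_{\max}}$. Summing over the finitely many $w$ with $|w|\le 1$ yields $\|X_{s,t}\|\le C\omega(s,t)^{1/p_{\max}}$, with $C$ depending only on $p_1,\dots,p_d,d$.

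Next I would invoke the equivalence of $\|\cdot\|$ and $\|\cdot\|'$ recalled just before the statement (proved by a Friz--Victoir style argument) to pass from the bound on $\|X_{s,t}\|$ to
\begin{equation*}
\|X_{s,t}\|'\le C\,\omega(s,t)^{1/p_{\max}}.
\end{equation*}
Since the geodesic $x^{s,t}$ realises the infimum defining $\|\cdot\|'$, and since $1$-variation is invariant under reparametrisation of the time interval from $[0,1]$ to $[s,t]$, we have
\begin{equation*}
\sum_{i=1}^d \bigl\|x^{s,t,i}\bigr\|_{1-var,[s,t]}^{p_i/p_{\max}} \;=\; \|X_{s,t}\|' \;\le\; C\,\omega(s,t)^{1/p_{\max}}.
\end{equation*}

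Finally, each summand on the left is nonnegative, so for each fixed $i$ we obtain $\|x^{s,t,i}\|_{1-var,[s,t]}^{p_i/p_{\max}}\le C\omega(s,t)^{1/p_{\max}}$, and raising both sides to the power $p_{\max}/p_i$ gives the desired bound $\|x^{s,t,i}\|_{1-var,[s,t]}\le C'\,\omega(s,t)^{1/p_i}$.

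The proof is essentially routine given the preceding machinery; the only point requiring care is ensuring that the constant in the norm equivalence depends only on $p_1,\dots,p_d,d$, which is exactly the content of the Friz--Victoir type argument already invoked, and that the reparametrisation from $[0,1]$ to $[s,t]$ leaves both the signature entries and the individual component $1$-variations unchanged. I do not anticipate a genuine obstacle.
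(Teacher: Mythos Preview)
Your proof is correct and follows essentially the same route as the paper: bound $\|X_{s,t}\|$ by $C\omega(s,t)^{1/p_{\max}}$ via the trivial partition, pass to $\|X_{s,t}\|'$ by norm equivalence, and then use that the geodesic realises the infimum so each component term is dominated. The paper compresses this into the single chain $\|x^{s,t,i}\|_{1\text{-var}}^{p_i/p_{\max}} \le \|X_{s,t}\|' \le C\|X_{s,t}\| \le C\omega(s,t)^{1/p_{\max}}$, which is exactly your argument.
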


\begin{proof}
Since $\left\Vert \cdot \right\Vert ^{\prime }$ and $\left\Vert \cdot
\right\Vert $ are equivalent, we have%
\begin{equation*}
\left\Vert x^{s,t,i}\right\Vert _{1-var,\left[ s,t\right] }^{\frac{p_{i}}{%
p_{\max }}}\leq \left\Vert X_{s,t}\right\Vert ^{\prime }\leq C\left\Vert
X_{s,t}\right\Vert \leq C\omega \left( s,t\right) ^{\frac{1}{p_{\max }}}.
\end{equation*}
\end{proof}

Recall Notation $F^{i_{1}\cdots i_{m}}:=\left( f_{i_{1}}\circ \cdots \circ
f_{i_{m}}\right) \left( I\right) $.

\begin{lemma}
\label{Lemma indices}For some $\alpha >0$, suppose $f_{i}$ is $Lip\left(
\gamma _{i}\right) $ for $\gamma _{i}>p_{\max }\left( \alpha
-p_{i}^{-1}\right) +1$, $i=1,2,\dots ,d$. Then for $w\in W$, $\left\vert
w\right\vert \leq \alpha $, $F^{w}$ is $Lip\left( \eta \right) $ for some $%
\eta >1$.
\end{lemma}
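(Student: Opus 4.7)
The plan is to prove by induction on the word length $m$ that for every $w = i_1 \cdots i_m \in W$ with $|w| \leq \alpha$, the vector field $F^w$ is $Lip(\eta_w)$ with
\[
\eta_w \;=\; \min_{1 \leq k \leq m}\bigl(\gamma_{i_k} - (k-1)\bigr),
\]
and then to verify that $\eta_w > 1$ under the stated hypotheses. The base case $m = 1$ is immediate, since $F^i = f_i(I) = f_i$ is $Lip(\gamma_i)$ and $\gamma_i > p_{\max}(\alpha - p_i^{-1}) + 1 \geq 1$ whenever $\alpha \geq p_i^{-1} = |i|$.

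For the inductive step I would decompose
\[
F^{i_1 \cdots i_m} \;=\; f_{i_1}\!\bigl(F^{i_2 \cdots i_m}\bigr) \;=\; \bigl(D F^{i_2 \cdots i_m}\bigr)\, f_{i_1},
\]
and combine two standard facts about the Stein Lipschitz classes of \cite[Definition 1.2.4]{lyons1998differential}: (i) if $\varphi$ is $Lip(\eta)$ with $\eta > 1$, then each component of $D\varphi$ is $Lip(\eta - 1)$; and (ii) the pointwise product of a $Lip(a)$ and a $Lip(b)$ (matrix- or vector-valued) function is $Lip(\min(a,b))$. The subword $i_2 \cdots i_m$ still satisfies $|i_2 \cdots i_m| \leq \alpha$, so the inductive hypothesis applies to it and yields regularity $\min_{k \geq 2}(\gamma_{i_k} - (k-2))$, which is strictly greater than $1$ by the argument below and hence legitimizes the differentiation. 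Applying (i) and (ii) then gives exactly the claimed formula for $\eta_w$.

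To conclude $\eta_w > 1$ it suffices to check $\gamma_{i_k} > k$ for every $k = 1, \dots, m$. From $|w| = \sum_j p_{i_j}^{-1} \leq \alpha$ and $p_{i_j}^{-1} \geq p_{\max}^{-1}$,
\[
p_{\max}\bigl(\alpha - p_{i_k}^{-1}\bigr) \;\geq\; p_{\max} \sum_{j \neq k} p_{i_j}^{-1} \;\geq\; m - 1 \;\geq\; k - 1,
\]
so the hypothesis $\gamma_{i_k} > p_{\max}(\alpha - p_{i_k}^{-1}) + 1$ forces $\gamma_{i_k} > k$, and taking the minimum over $k$ gives $\eta_w > 1$.

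The only real obstacle is fact (ii) at non-integer smoothness levels, which is routine from the Stein definition via the usual product-rule estimates; the strict inequality in the hypothesis is crucial, since it guarantees that every intermediate $F^{i_k \cdots i_m}$ stays strictly above $Lip(1)$ throughout the induction, so no loss of regularity occurs when iterating the derivative.
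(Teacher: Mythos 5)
Your proposal is correct and follows essentially the same route as the paper: the heart of both arguments is the combinatorial inequality $\gamma_{i_k}>k$ for every position $k$, which the paper derives by contradiction from $\left\vert w\right\vert \leq \alpha$ and you derive directly via the contrapositive estimate $p_{\max }\left( \alpha -p_{i_{k}}^{-1}\right) \geq m-1\geq k-1$. The only difference is that you spell out the induction on the composition $F^{i_{1}\cdots i_{m}}=\left( DF^{i_{2}\cdots i_{m}}\right) f_{i_{1}}$ and the attendant $Lip$-calculus (differentiation losing one degree, products taking the minimum), which the paper treats as standard and leaves implicit.
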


\begin{proof}
Fix $w=i_{1}\cdots i_{m}\in W$ that satisfies $\left\vert w\right\vert \leq
\alpha $. Suppose $\gamma _{i_{j}}\leq j$ for some $j\in \left\{ 1,\dots
,m\right\} $. Then%
\begin{equation*}
\left\vert w\right\vert =\frac{1}{p_{i_{1}}}+\cdots +\frac{1}{p_{i_{m}}}\geq 
\frac{j-1}{p_{\max }}+\frac{1}{p_{i_{j}}}\geq \frac{\gamma _{i_{j}}-1}{%
p_{\max }}+\frac{1}{p_{i_{j}}}>\alpha ,
\end{equation*}%
contradicts with $\left\vert w\right\vert \leq \alpha $. Hence $\gamma
_{i_{j}}>j$ for every $j\in \left\{ 1,\dots ,m\right\} $ and $F^{w}$ is $%
Lip\left( \eta \right) $ for some $\eta >1$.
\end{proof}

\begin{lemma}
\label{Lemma bound on Fw inhomogeneous}Suppose $f_{i}$ is $Lip\left( \gamma
_{i}\right) $ for $\gamma _{i}>p_{\max }\left( 1-p_{i}^{-1}\right) +1$, $%
i=1,2,\dots ,d$. Then for $w\in W,\left\vert w\right\vert \leq 1$ and $%
i_{0}\in \left\{ 1,2,\dots ,d\right\} $, $\sup_{y\in 
\mathbb{R}
^{n}}\left\vert F^{i_{0}w}\left( y\right) \right\vert \leq \left[ p_{\max }%
\right] !$.
\end{lemma}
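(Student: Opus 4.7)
The plan is to expand $F^{i_{0}w}$ fully via the Leibniz rule, count the resulting summands combinatorially, and bound each of them uniformly. Write $w=i_{1}\cdots i_{m}$. Since every letter contributes at least $p_{\max}^{-1}$ to $\left\vert w\right\vert $, the hypothesis $\left\vert w\right\vert \le 1$ forces $m\le \left[ p_{\max}\right] $. The proof of Lemma~\ref{Lemma indices}, taken with $\alpha =1$, shows moreover that $\gamma _{i_{j}}>j$ for $j=1,\dots ,m$. Combined with the normalization $\left\Vert f_{i}\right\Vert _{\mathrm{Lip}\left( \gamma _{i}\right) }=1$ adopted at the start of Section~3, this yields the uniform bound $\sup_{y}\left\vert D^{\beta }f_{i_{j}}\left( y\right) \right\vert \le 1$ whenever $j\in \{1,\dots ,m\}$ and $\left\vert \beta \right\vert \le j$, and similarly $\sup_{y}\left\vert f_{i_{0}}\left( y\right) \right\vert \le 1$.

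Next I would establish the following book-keeping claim by induction on $M\ge 1$: for any word $j_{1}\cdots j_{M}\in W$, the vector field $F^{j_{1}\cdots j_{M}}$ is a sum of exactly $\left( M-1\right) !$ multilinear contractions, each containing exactly one factor $D^{\alpha _{l}}f_{j_{l}}$ for every $l\in \{1,\dots ,M\}$, with the multi-index $\alpha _{l}$ of order at most $l-1$. The base case $M=1$ is trivial since $F^{j_{1}}=f_{j_{1}}$ and $\alpha _{1}=0$. For the inductive step I would use the recursion $F^{j_{1}\cdots j_{M}}=D\!\left( F^{j_{2}\cdots j_{M}}\right) f_{j_{1}}$: differentiating each of the $M-1$ factors in each of the $\left( M-2\right) !$ summands of $F^{j_{2}\cdots j_{M}}$ produces exactly $\left( M-1\right) \left( M-2\right) !=\left( M-1\right) !$ new summands, and inserting $f_{j_{1}}$ as a fresh undifferentiated factor at the outermost slot preserves the derivative-order bound, because every pre-existing factor either keeps its order or picks up one extra derivative while its position index simultaneously shifts right by one.

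Applying the claim to $j_{0}j_{1}\cdots j_{m}:=i_{0}i_{1}\cdots i_{m}$ with $M=m+1$, the expansion of $F^{i_{0}w}$ has $m!$ multilinear summands, and in each summand the factor corresponding to $i_{j}$ (for $j=0,1,\dots ,m$) is $D^{\alpha _{j+1}}f_{i_{j}}$ with $\left\vert \alpha _{j+1}\right\vert \le j$, so by the first paragraph its operator norm is at most $1$. Since a multilinear contraction of factors of operator norm $\le 1$ again has norm $\le 1$, each summand contributes at most $1$, giving $\left\vert F^{i_{0}w}\left( y\right) \right\vert \le m!\le \left[ p_{\max}\right] !$, as required. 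The main obstacle is the combinatorial book-keeping of the middle paragraph, i.e.\ checking that the Leibniz step preserves both the exact count $\left( M-1\right) !$ and the derivative-order bound $\left\vert \alpha _{l}\right\vert \le l-1$. It is conceptually cleanest to identify the summands with elementary differentials indexed by ordered rooted trees on $M$ labelled nodes (Butcher-type trees), but the direct induction above suffices; the structural point used throughout is that $f_{i_{0}}$ sits at the outermost slot and is never itself differentiated, so no regularity beyond boundedness is required of $f_{i_{0}}$.
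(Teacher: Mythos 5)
Your proof is correct and follows essentially the same route as the paper: expand $F^{i_{0}w}$ into $m!$ summands (each differential operator choosing which later factor to differentiate), use Lemma \ref{Lemma indices} with $\alpha=1$ to guarantee $\gamma_{i_{j}}>j$ so every factor $D^{\alpha}f_{i_{j}}$ appearing is bounded by $\left\Vert f_{i_{j}}\right\Vert_{Lip\left(\gamma_{i_{j}}\right)}=1$, and conclude $m!\leq\left[p_{\max}\right]!$. The paper states the $m!$ count and the derivative-order bound in one line where you verify them by an explicit induction, but the underlying argument is identical.
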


\begin{proof}
Based on Lemma \ref{Lemma indices} (with $\alpha =1$), when $\left\vert
w\right\vert \leq 1$, $F^{w}$ is $Lip\left( \eta \right) $ for some $\eta >1$%
. Suppose $w=i_{1}\cdots i_{m}$. Then $F^{i_{0}w}=\left( f_{i_{0}}\circ
f_{i_{1}}\circ \cdots \circ f_{i_{m}}\right) \left( I\right) $ contains $m!$
functions as the differential operator $f_{i_{j}}D$ choosing one from $%
\{f_{i_{j+1}},\dots ,f_{i_{m}}\}$ for $j=0,\dots ,m-1$. Since $\left\Vert
f_{i}\right\Vert _{Lip\left( \gamma _{i}\right) }=1$, each of these
functions is bounded by $1$. As a result, $\sup_{y\in 
\mathbb{R}
^{n}}\left\vert F^{i_{0}w}\left( y\right) \right\vert \leq m!\leq \left[
p_{\max }\right] !$.
\end{proof}

\begin{lemma}
\label{Lemma A}Let $x=\left( x^{1},\dots ,x^{d}\right) :\left[ 0,1\right]
\rightarrow 
\mathbb{R}
^{d}$ be a continuous bounded variation path. Suppose $f_{i}:%
\mathbb{R}
^{n}\rightarrow 
\mathbb{R}
^{n}$ is $Lip\left( \gamma _{i}\right) $ for $\gamma _{i}>p_{\max }\left(
1-p_{i}^{-1}\right) +1$, $i=1,2,\dots ,d$. Let $y:\left[ 0,1\right]
\rightarrow 
\mathbb{R}
^{n}$ be the unique solution of the ordinary differential equation%
\begin{equation*}
dy_{t}=\sum_{i=1}^{d}f_{i}\left( y_{t}\right) dx_{t}^{i},y_{0}=\xi .
\end{equation*}%
Suppose there exist $C>0$ and $K\in \left[ 0,1\right] $ such that $%
\left\Vert x^{i}\right\Vert _{1-var,\left[ 0,1\right] }\leq CK^{\frac{1}{%
p_{i}}}$ for $i=1,2,\dots ,d$. Denote $\theta :=\min_{w\in W,\left\vert
w\right\vert >1}\left\vert w\right\vert $. Then there exists a constant $%
C^{\prime }$ depending on $C,p_{1},\cdots ,p_{d},d\ $such that 
\begin{equation*}
\left\vert y_{1}-\xi -\sum_{w\in W,\left\vert w\right\vert \leq
1}F^{w}\left( \xi \right) \left( S_{\Pi }\left( x\right) _{0,1},w\right)
\right\vert \leq C^{\prime }K^{\theta }.
\end{equation*}
\end{lemma}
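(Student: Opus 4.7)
The strategy is iterated Taylor expansion of the bounded-variation ODE $y_1 - \xi = \int_0^1 \sum_{i=1}^d f_i(y_t)\,dx_t^i$. For each $i$ I would Taylor-expand $f_i(y_t)$ about $\xi$ to order $\lfloor\gamma_i\rfloor$, with pointwise remainder at most $C|y_t - \xi|^{\gamma_i}$, then substitute the ODE into each appearance of $y_u - \xi$ and iterate. Each substitution appends to the current word a letter of weight at least $1/p_{\max}$, so the procedure terminates after at most $\lceil p_{\max}\rceil$ rounds, after which every surviving term either contributes to the Chen-type leading sum or is collected into the remainder. Lemma \ref{Lemma indices} (applied with $\alpha = 1$) ensures that $F^w$ is $Lip(\eta)$ for some $\eta > 1$ whenever $|w| \leq 1$, so every term in the claimed sum is well defined.

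Before iterating I would establish the a priori bound $|y_t - \xi| \leq C K^{1/p_{\max}}$ on $[0,1]$: using $\|f_i\|_\infty \leq 1$ (from the normalisation $\|f_i\|_{Lip(\gamma_i)} = 1$), the hypothesis $\|x^i\|_{1-var} \leq C K^{1/p_i}$, and $K \leq 1$, this follows from $|y_t - \xi| \leq \sum_i \|x^i\|_{1-var,[0,t]}$. The remainder then splits into two types of contributions. First, iterated integrals associated to words with $|w| > 1$ satisfy $|(S_\Pi(x)_{0,1}, w)| \leq \prod_j \|x^{i_j}\|_{1-var} \leq C^{|w|} K^{|w|} \leq C'K^\theta$ since $K \in [0,1]$. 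Second, the pointwise Taylor remainder for $f_i$ integrates to at most $C\int_0^1 |y_u - \xi|^{\gamma_i}\,d\|x^i\|_{1-var} \leq C K^{\gamma_i/p_{\max} + 1/p_i}$, and the hypothesis $\gamma_i > p_{\max}(1 - 1/p_i) + 1$ forces the exponent in this bound to be strictly greater than $1 + 1/p_{\max}$.

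The main obstacle is the combinatorial inequality $\theta \leq 1 + 1/p_{\max}$, which is exactly what is needed to conclude that the Taylor remainders are of order $K^\theta$. To prove it, I would take any $w^*$ achieving the maximum of $|w|$ over $\{w\in W: |w| \leq 1\}$ and append a letter $i^*$ with $p_{i^*}=p_{\max}$: the extension $w^* i^*$ must have weight $> 1$ (else $w^*$ was not maximal), so $\theta \leq |w^* i^*| = |w^*| + 1/p_{\max} \leq 1 + 1/p_{\max}$. The remaining task is the bookkeeping of the iterated substitution: identifying, via the shuffle identity for products of iterated integrals, that the coefficient of each $F^w(\xi)(S_\Pi(x)_{0,1}, w)$ with $|w| \leq 1$ in the resulting expansion is exactly $1$. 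This is a routine combinatorial verification in the Chen-series tradition. Summing the finitely many remainder contributions (with cardinality bounded in terms of $p_1,\ldots,p_d,d$), each of size at most a constant times $K^\theta$, gives the stated bound with $C'$ depending only on $C, p_1, \ldots, p_d, d$.
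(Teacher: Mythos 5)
Your argument is correct in outline, but it takes a genuinely different route from the paper's. You run a Davie/Friz--Victoir-style Euler expansion: Taylor-expand each $f_i$ about $\xi$ to order $\lfloor\gamma_i\rfloor$ with a H\"older remainder $O(|y_t-\xi|^{\gamma_i})$, substitute the ODE into the polynomial terms, and reassemble them into the Chen series via shuffle identities. This obliges you to prove three auxiliary facts: the a priori bound $|y_t-\xi|\leq CK^{1/p_{\max}}$, the inequality $\theta\leq 1+p_{\max}^{-1}$ (your maximal-word argument is correct, and the Lipschitz hypothesis then makes each H\"older remainder of order $K^{\theta}$ --- note the same bound also covers the remainders arising at deeper levels of the substitution, where the extra factor $K^{\sigma}\leq 1$ from the already-accumulated weight only helps), and the coefficient-one bookkeeping, which is the bulk of the work and the part you leave as ``routine''. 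The paper instead iterates the fundamental theorem of calculus directly on the compositions $F^{w}(y_u)$, stopping each branch as soon as the word weight exceeds $1$: the remainder is then a finite sum of iterated integrals of $F^{i_0\cdots i_m}(y_{u_0})$ against $|dx^{i_0}|\cdots|dx^{i_m}|$ with $|i_1\cdots i_m|\leq 1<|i_0\cdots i_m|$, whose integrands are uniformly bounded by $[p_{\max}]!$ (Lemma \ref{Lemma bound on Fw inhomogeneous}) and whose weights are automatically $\geq\theta$; the bound $C'K^{\theta}$ then drops out with no H\"older remainders, no shuffle algebra, and no need for $\theta\leq 1+p_{\max}^{-1}$. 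One small slip: your bound $\prod_j\|x^{i_j}\|_{1-var}\leq C^{|w|}K^{|w|}$ should have exponent equal to the length $m$ of $w$ on the constant, not the weight $|w|$; this is harmless since the words that arise have bounded length.
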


\begin{proof}
By the fundamental theorem of calculus and Lemma \ref{Lemma bound on Fw
inhomogeneous}, 
\begin{eqnarray*}
&&\left\vert y_{1}-\xi -\sum_{w\in W,\left\vert w\right\vert \leq
1}F^{w}\left( \xi \right) \left( S_{\Pi }\left( x\right) _{0,1},w\right)
\right\vert \\
&\leq &\sum_{\substack{ \left\vert i_{1}\cdots i_{m}\right\vert \leq 1  \\ %
\left\vert i_{0}i_{1}\cdots i_{m}\right\vert >1}}\;\;\idotsint%
\limits_{0<u_{0}<\cdots <u_{m}<1}\left\vert F^{i_{0}\cdots i_{m}}\left(
y_{u_{0}}\right) \right\vert \left\vert dx_{u_{0}}^{i_{0}}\right\vert \cdots
\left\vert dx_{u_{m}}^{i_{m}}\right\vert \\
&\leq &C^{\prime }K^{\theta }
\end{eqnarray*}%
where we used $\left\Vert x^{i}\right\Vert _{1-var,\left[ 0,1\right] }\leq
CK^{\frac{1}{p_{i}}}\ $for some $K\in \left[ 0,1\right] $.
\end{proof}

\begin{lemma}
\label{Lemma B}Let $x=\left( x^{1},\dots ,x^{d}\right) :\left[ 0,1\right]
\rightarrow 
\mathbb{R}
^{d}$ be a continuous bounded variation path. Suppose $f=\left( f_{1},\dots
,f_{d}\right) $ satisfies that $f_{i}:%
\mathbb{R}
^{n}\rightarrow 
\mathbb{R}
^{n}$ is $Lip\left( \gamma _{i}\right) $ for $\gamma _{i}>p_{\max }\left(
1-p_{i}^{-1}\right) +1$, $i=1,2,\dots ,d$. Let $y^{j}:\left[ 0,1\right]
\rightarrow 
\mathbb{R}
^{n},j=1,2\ $be the solution of the ordinary differential equation%
\begin{equation*}
dy_{t}^{j}=\sum_{i=1}^{d}f_{i}\left( y_{t}^{j}\right)
dx_{t}^{i},y_{0}^{j}=\xi ^{j}.
\end{equation*}%
Suppose there exist $C>0$ and $K\in \left[ 0,1\right] $ such that $%
\left\Vert x^{i}\right\Vert _{1-var,\left[ 0,1\right] }\leq CK^{\frac{1}{%
p_{i}}}$ for $i=1,2,\dots ,d$. Then there exists a constant $C^{\prime }\ $%
depending on $C,d$ such that%
\begin{equation*}
\left\vert y_{1}^{1}-\xi ^{1}-\left( y_{1}^{2}-\xi ^{2}\right) \right\vert
\leq C^{\prime }\left\vert \xi ^{1}-\xi ^{2}\right\vert K^{\frac{1}{p_{\max }%
}}.
\end{equation*}
\end{lemma}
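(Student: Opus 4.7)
The plan is to reduce everything to a Gronwall estimate on $|y^1_t - y^2_t|$ and then substitute that bound into the integral identity satisfied by $y^1_t - \xi^1 - (y^2_t - \xi^2)$. Since $\gamma_i > p_{\max}(1-p_i^{-1}) + 1 \geq 1$, each $f_i$ is globally Lipschitz with constant $\|f_i\|_{Lip(\gamma_i)} = 1$, which is all the regularity I need for this estimate (no higher Taylor expansion is needed here, in contrast to Lemma \ref{Lemma A}).

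First I would write, from the integral form of the ODE,
\begin{equation*}
y_t^1 - y_t^2 - (\xi^1 - \xi^2) = \sum_{i=1}^d \int_0^t \bigl(f_i(y_s^1) - f_i(y_s^2)\bigr)\, dx_s^i,
\end{equation*}
and bound the integrand by $|y^1_s - y^2_s|$. This yields
\begin{equation*}
|y_t^1 - y_t^2| \leq |\xi^1 - \xi^2| + \sum_{i=1}^d \int_0^t |y_s^1 - y_s^2|\, d\|x^i\|_{1\text{-}var,[0,s]},
\end{equation*}
so Gronwall's inequality combined with $\|x^i\|_{1\text{-}var,[0,1]} \leq C K^{1/p_i} \leq C$ (using $K \in [0,1]$) gives $\sup_{s\in[0,1]}|y_s^1 - y_s^2| \leq |\xi^1 - \xi^2|\, e^{dC}$.

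Second, I would substitute this sup bound back into the integral representation of $y_1^1 - \xi^1 - (y_1^2 - \xi^2)$ to obtain
\begin{equation*}
|y_1^1 - \xi^1 - (y_1^2 - \xi^2)| \leq |\xi^1 - \xi^2|\, e^{dC} \sum_{i=1}^d \|x^i\|_{1\text{-}var,[0,1]} \leq |\xi^1 - \xi^2|\, e^{dC} \cdot C \sum_{i=1}^d K^{1/p_i}.
\end{equation*}
The last step is to convert the inhomogeneous bound $\sum_i K^{1/p_i}$ into the uniform bound $dK^{1/p_{\max}}$; this uses the elementary observation that for $K \in [0,1]$ and $p_i \leq p_{\max}$ one has $K^{1/p_i} \leq K^{1/p_{\max}}$. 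Setting $C' := dC e^{dC}$ concludes the proof.

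There is no real obstacle here: the argument is a textbook Gronwall estimate. The only point requiring attention is the exponent conversion at the end, which is exactly where the assumption $K \leq 1$ is used; without it the inhomogeneous variation bounds $CK^{1/p_i}$ could not be combined into a single $K^{1/p_{\max}}$ factor.
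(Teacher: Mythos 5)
Your proof is correct and follows essentially the same route as the paper: both rest on the normalized global Lipschitz bound for the $f_i$ together with Gronwall, with the factor $K^{1/p_{\max}}$ extracted from $\sum_{i=1}^{d}\left\Vert x^{i}\right\Vert _{1\text{-}var,\left[ 0,1\right] }\leq dCK^{1/p_{\max}}$ using $K\leq 1$. The paper merely compresses your two steps into a single Gronwall estimate applied directly to $\left\vert \overline{y}_{t}-\overline{y}_{0}\right\vert$ rather than to $\left\vert \overline{y}_{t}\right\vert$ followed by a second integration.
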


\begin{proof}
Denote $\overline{y}_{t}:=y_{t}^{1}-y_{t}^{2}$. Then%
\begin{eqnarray*}
&&\left\vert \overline{y}_{t}-\overline{y}_{0}\right\vert \\
&\leq &\sum_{i=1}^{d}\left\Vert f_{i}\right\Vert _{Lip\left( \gamma
_{i}\right) }\left\vert \overline{y}_{0}\right\vert \left\Vert
x^{i}\right\Vert _{1-var,\left[ 0,1\right] }+\sum_{i=1}^{d}\int_{0}^{t}\left%
\Vert f_{i}\right\Vert _{Lip\left( \gamma _{i}\right) }\left\vert \overline{y%
}_{r}-\overline{y}_{0}\right\vert \left\vert dx_{r}^{i}\right\vert .
\end{eqnarray*}%
Since $\left\Vert f_{i}\right\Vert _{Lip\left( \gamma _{i}\right) }=1$ and $%
\left\Vert x^{i}\right\Vert _{1-var,\left[ 0,1\right] }\leq CK^{\frac{1}{%
p_{i}}}$ for $K\in \left[ 0,1\right] $, applying Gronwall's Lemma \cite[%
Lemma 3.2]{friz2010multidimensional}, the proposed estimate holds.
\end{proof}

\begin{proof}[Proof of Lemma \protect\ref{Lemma Euler estimate}]
Firstly suppose $x:\left[ 0,T\right] \rightarrow 
\mathbb{R}
^{d}$ is continuous and of bounded variation. Let $y:\left[ 0,T\right]
\rightarrow 
\mathbb{R}
^{n}$ denote the solution of the ODE%
\begin{equation*}
dy_{t}=f\left( y_{t}\right) dx_{t},y_{0}=\xi .
\end{equation*}%
For $\left[ s,t\right] \subset \left[ 0,T\right] $, let $x^{s,t}:\left[ s,t%
\right] \rightarrow 
\mathbb{R}
^{d}$ be a geodesic associated with $S_{\Pi }\left( x\right) _{s,t}$, and
let $y^{s,t}:\left[ s,t\right] \rightarrow 
\mathbb{R}
^{n}$ denote the solution of the ODE%
\begin{equation*}
dy_{r}^{s,t}=f\left( y_{r}^{s,t}\right) dx_{r}^{s,t},y_{s}^{s,t}=y_{s}.
\end{equation*}%
Denote%
\begin{equation*}
\Gamma ^{s,t}:=y_{t}-y_{s}-\left( y_{t}^{s,t}-y_{s}\right) .
\end{equation*}

Let $\omega $ be the control associated with $S_{\Pi }\left( x\right) $ as
in Notation \ref{Notation control}. Firstly suppose $\omega \left(
s,t\right) \leq 1$. For $s\leq u\leq t$, let $x^{s,u,t}:\left[ s,t\right]
\rightarrow 
\mathbb{R}
^{d}$ denote the concatenation of $x^{s,u}$ with $x^{u,t}$, and let $%
y^{s,u,t}:\left[ s,t\right] \rightarrow 
\mathbb{R}
^{n}$ denote the solution of the ODE%
\begin{equation*}
dy_{r}^{s,u,t}=f\left( y_{r}^{s,u,t}\right)
dx_{r}^{s,u,t},y_{s}^{s,u,t}=y_{s}.
\end{equation*}%
Based on Lemma \ref{Lemma control of geodesic},%
\begin{eqnarray*}
\left\Vert x^{s,t,i}\right\Vert _{1-var,\left[ s,t\right] } &\leq &C\omega
\left( s,t\right) ^{\frac{1}{p_{i}}}. \\
\left\Vert x^{s,u,t,i}\right\Vert _{1-var,\left[ s,t\right] } &\leq
&\left\Vert x^{s,u,i}\right\Vert _{1-var,\left[ s,u\right] }+\left\Vert
x^{u,t,i}\right\Vert _{1-var,\left[ u,t\right] }\leq 2C\omega \left(
s,t\right) ^{\frac{1}{p_{i}}}.
\end{eqnarray*}%
Then based on Lemma \ref{Lemma A} and Lemma \ref{Lemma B}, 
\begin{eqnarray*}
&&\left\vert \Gamma ^{s,u}+\Gamma ^{u,t}-\Gamma ^{s,t}\right\vert \\
&\leq &\left\vert y_{t}^{s,t}-y_{t}^{s,u,t}\right\vert +\left\vert
y_{t}^{s,u,t}-y_{u}^{s,u}-\left( y_{t}^{u,t}-y_{u}\right) \right\vert \\
&\leq &C_{1}\omega \left( s,t\right) ^{\theta }+C_{2}\left\vert \Gamma
^{s,u}\right\vert \omega \left( u,t\right) ^{\frac{1}{p_{\max }}}
\end{eqnarray*}%
Then%
\begin{equation}
\left\vert \Gamma ^{s,t}\right\vert \leq \left( 1+C_{2}\omega \left(
s,t\right) ^{\frac{1}{p_{\max }}}\right) \left( \left\vert \Gamma
^{s,u}\right\vert +\left\vert \Gamma ^{u,t}\right\vert \right) +C_{1}\omega
\left( s,t\right) ^{\theta }.  \label{inductive step}
\end{equation}%
With $C_{1}$ and $C_{2}$ in $\left( \ref{inductive step}\right) $, denote%
\begin{equation*}
\delta :=\max \left( \left( C_{2}\right) ^{p_{\max }},\left( C_{1}\right) ^{%
\frac{1}{\theta }}\right) \omega \left( s,t\right)
\end{equation*}%
Denote $\left[ t_{0}^{0},t_{1}^{0}\right] :=\left[ s,t\right] $. Divide $%
\left[ t_{j}^{n},t_{j+1}^{n}\right] =\left[ t_{2j}^{n+1},t_{2j+1}^{n+1}%
\right] \cup \left[ t_{2j+1}^{n+1},t_{2j+2}^{n+1}\right] $ such that 
\begin{equation*}
\omega \left( t_{2j}^{n+1},t_{2j+1}^{n+1}\right) =\omega \left(
t_{2j+1}^{n+1},t_{2j+2}^{n+1}\right) \leq 2^{-1}\omega \left(
t_{j}^{n},t_{j+1}^{n}\right)
\end{equation*}%
for $j=0,\dots ,2^{n}-1,n=0,1,2,\dots $, which is possible because $\omega $
is super-additive and continuous \cite[Proposition 5.8]%
{friz2010multidimensional}. By iteratively applying $\left( \ref{inductive
step}\right) $, as $\omega \left( s,t\right) \leq 1$,%
\begin{eqnarray}
\left\vert \Gamma ^{s,t}\right\vert &\leq &\overline{\lim_{n\rightarrow
\infty }}\left( 1+\sum_{j=0}^{n}\prod\limits_{k=0}^{j}\left( 1+\left( \frac{%
\delta }{2^{k}}\right) ^{\frac{1}{p_{\max }}}\right) \frac{1}{2^{\left(
j+1\right) \left( \theta -1\right) }}\right) \delta ^{\theta }  \notag \\
&&+\overline{\lim_{n\rightarrow \infty }}\prod\limits_{k=0}^{n}\left(
1+\left( \frac{\delta }{2^{k}}\right) ^{\frac{1}{p_{\max }}}\right) \left(
\sum_{j=0}^{2^{n+1}-1}\left\vert \Gamma ^{t_{j}^{n},t_{j+1}^{n}}\right\vert
\right)  \notag \\
&\leq &\exp \left( \frac{2^{\frac{1}{p_{\max }}}C}{2^{\frac{1}{p_{\max }}}-1}%
\right) \left( \frac{2^{\theta -1}}{2^{\theta -1}-1}\delta ^{\theta }+%
\overline{\lim_{n\rightarrow \infty }}\sum_{j=0}^{2^{n}-1}\left\vert \Gamma
^{t_{j}^{n},t_{j+1}^{n}}\right\vert \right) .
\label{inner estimate Gamma s,t}
\end{eqnarray}

Since $x$ is continuous with bounded variation and $x^{s,t}$ is a geodesic
associated with $S_{\Pi }\left( x\right) _{s,t}$, we have $S_{\Pi }\left(
x^{s,t}\right) _{s,t}=S_{\Pi }\left( x\right) _{s,t}$ and $\left\Vert
x^{s,t}\right\Vert _{1-var,\left[ s,t\right] }\leq \left\Vert x\right\Vert
_{1-var,\left[ s,t\right] }$. By Taylor expansions and that $f_{i}$ is $%
Lip\left( \gamma _{i}\right) $ for $\gamma _{i}>1$, $i=1,\dots ,d$, 
\begin{equation*}
\left\vert \Gamma ^{t_{j}^{n},t_{j+1}^{n}}\right\vert \leq C\left\Vert
x\right\Vert _{1-var,\left[ t_{j}^{n},t_{j+1}^{n}\right] }^{2}.
\end{equation*}%
Let $n\rightarrow \infty $ in $\left( \ref{inner estimate Gamma s,t}\right) $%
,%
\begin{equation}
\left\vert \Gamma ^{s,t}\right\vert \leq C\omega \left( s,t\right) ^{\theta
}.  \label{inner1}
\end{equation}%
Combining $\left( \ref{inner1}\right) $, Lemma \ref{Lemma A} and $\left\Vert
x^{s,t,i}\right\Vert _{1-var,\left[ s,t\right] }\leq C\omega \left(
s,t\right) ^{\frac{1}{p_{i}}}$, when $\omega \left( s,t\right) \leq 1$,%
\begin{equation}
\left\vert y_{t}-y_{s}-\sum_{w\in W,0<\left\vert w\right\vert \leq
1}F^{w}\left( y_{s}\right) \left( S_{\Pi }\left( x\right) _{s,t},w\right)
\right\vert \leq C\omega \left( s,t\right) ^{\theta }
\label{bounded variation estimate}
\end{equation}%
where the constant $C$ in $\left( \ref{bounded variation estimate}\right) $
is continuous with respect to $\Pi =\left( p_{1},\cdots ,p_{d}\right) $ in a
neighborhood of $\Pi $.

So far we assumed that $x$ is continuous and of bounded variation. Suppose $%
X $ is a weakly geometric $\Pi $-rough path for $\Pi =\left( p_{1},\cdots
,p_{d}\right) $. Then $X$ is a geometric $\Pi ^{\prime }$-rough path for $%
\Pi ^{\prime }=\left( p_{1}^{\prime },\cdots ,p_{d}^{\prime }\right) $, $%
p_{i}^{\prime }>p_{i}$ \cite[Lemma 3.5]{boedihardjo2019isomorphism}. By
applying universal limit theorem \cite[Theorem 4.3]{gyurko2016differential},
using the continuity of the control $\omega $ with respect to $\Pi $ \cite[%
Lemma 5.13]{friz2010multidimensional} and the continuity of the constant $C$
in $\left( \ref{bounded variation estimate}\right) $ with respect to $\Pi $,
when $\omega \left( s,t\right) \leq 1$,%
\begin{equation}
\left\vert y_{t}-y_{s}-\sum_{w\in W,0<\left\vert w\right\vert \leq
1}F^{w}\left( y_{s}\right) \left( X_{s,t},w\right) \right\vert \leq C\omega
\left( s,t\right) ^{\theta }.  \label{inner 3}
\end{equation}

When $\omega \left( s,t\right) >1$, divide $\left[ s,t\right] =\left[
t_{0},t_{1}\right] \cup \left[ t_{1},t_{2}\right] \cup \cdots \cup \left[
t_{n-1},t_{n}\right] $ such that $\omega \left( t_{k},t_{k+1}\right) =1$ for 
$k=0,\dots ,n-2$ and $\omega \left( t_{n-1},t_{n}\right) \leq 1$. By
super-additivity of $\omega $, $n-1<\omega \left( s,t\right) $ and $n<\omega
\left( s,t\right) +1$. Since $\omega \left( t_{k},t_{k+1}\right) \leq 1$,
based on $\left( \ref{inner 3}\right) $ and Lemma \ref{Lemma bound on Fw
inhomogeneous},%
\begin{equation*}
\left\vert y_{t_{k+1}}-y_{t_{k}}\right\vert \leq C\omega \left(
t_{k},t_{k+1}\right) ^{\frac{1}{p_{\max }}}\leq C\text{ for }k=0,1,\dots ,n-1%
\text{.}
\end{equation*}%
Hence,%
\begin{equation}
\left\vert y_{t}-y_{s}\right\vert \leq \sum_{k=0}^{n-1}\left\vert
y_{t_{k+1}}-y_{t_{k}}\right\vert \leq Cn<C\left( \omega \left( s,t\right)
+1\right) <2C\omega \left( s,t\right) \text{.}  \label{inner 4}
\end{equation}%
On the other hand, based on Lemma \ref{Lemma bound on Fw inhomogeneous} and $%
\left\vert \left( X_{s,t},w\right) \right\vert \leq \omega \left( s,t\right)
^{\left\vert w\right\vert }$, $\left\vert w\right\vert \leq 1$,%
\begin{equation}
\left\vert \sum_{w\in W,0<\left\vert w\right\vert \leq 1}F^{w}\left(
y_{s}\right) \left( X_{s,t},w\right) \right\vert \leq C\omega \left(
s,t\right) .  \label{inner 5}
\end{equation}%
Combining $\left( \ref{inner 4}\right) $ and $\left( \ref{inner 5}\right) $,
when $\omega \left( s,t\right) >1$, we have%
\begin{equation*}
\left\vert y_{t}-y_{s}-\sum_{w\in W,0<\left\vert w\right\vert \leq
1}F^{w}\left( y_{s}\right) \left( X_{s,t},w\right) \right\vert \leq C\omega
\left( s,t\right) .
\end{equation*}
\end{proof}

In the following, we suppose there exists $p\geq 1$ such that $X$ is a
weakly geometric $\Pi $-rough path for $\Pi =\left( p_{1},\cdots
,p_{d}\right) $ where $p_{i}=pk_{i}^{-1}$ for some $k_{i}\in \left\{
1,2,\cdots ,\left[ p\right] \right\} $. Suppose $f=\left( f_{1},\cdots
,f_{d}\right) $ where $f_{i}:%
\mathbb{R}
^{n}\rightarrow 
\mathbb{R}
^{n}$ is $Lip\left( \gamma _{i}\right) $ for some $\gamma _{i}>p_{\max
}\left( 1-k_{i}p^{-1}\right) +1$. Let $y:\left[ 0,T\right] \rightarrow 
\mathbb{R}
^{n}$ denote the unique solution of the rough differential equation%
\begin{equation*}
dy_{t}=f\left( y_{t}\right) dX_{t},y_{0}=\xi .
\end{equation*}%
For $0\leq s<t\leq T$, \ suppose $x^{s,t}:\left[ s,t\right] \rightarrow 
\mathbb{R}
^{d}$ is a geodesic associated with $X_{s,t}$. Let $y^{s,t}$ denote the
unique solution of the ODE%
\begin{equation*}
dy_{u}^{s,t}=f\left( y_{u}^{s,t}\right) dx_{u}^{s,t},y_{s}^{s,t}=y_{s}\text{.%
}
\end{equation*}%
Denote $\gamma :=\min_{i=1}^{d}\left\{ \left( \gamma _{i}-1\right) pp_{\max
}^{-1}+k_{i}\right\} $ and denote $N:=\lfloor \gamma \rfloor $. Then $N\geq %
\left[ p\right] $.

\begin{notation}
Suppose $p_{i}=pk_{i}^{-1}$ for $k_{i}\in \left\{ 1,\dots ,\left[ p\right]
\right\} $. For $w\in W$, denote 
\begin{equation*}
\left\Vert w\right\Vert :=p\left\vert w\right\vert .
\end{equation*}%
Then for $w=i_{1}\cdots i_{m}$, $\left\Vert w\right\Vert =k_{i_{1}}+\cdots
+k_{i_{m}}$.
\end{notation}

\begin{lemma}
\label{Lemma bound on Fw}For $w\in W$, $\left\Vert w\right\Vert \leq N$, $%
\left\vert F^{w}\left( y\right) -F^{w}\left( x\right) \right\vert \leq
\left\Vert w\right\Vert !\left\vert y-x\right\vert $ for every $x,y\in 
\mathbb{R}
^{n}$ and for $i\in \left\{ 1,\cdots ,d\right\} $, $\sup_{y\in 
\mathbb{R}
^{n}}\left\vert F^{iw}\left( y\right) \right\vert \leq \left\Vert
w\right\Vert !$
\end{lemma}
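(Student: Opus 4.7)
The plan is to expand $F^{iw}$ and $F^w$ via iterated applications of the chain and Leibniz rules in the same bookkeeping as Lemma \ref{Lemma bound on Fw inhomogeneous}, and then bound each resulting term using the normalization $\|f_i\|_{Lip(\gamma_i)} = 1$. Once I can argue that the derivative orders appearing in the expansion stay within the Lipschitz class of each $f_i$, the factorial bound reduces to counting terms.

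The crux is the following derivative-order claim: for $w = i_1 \cdots i_m$ with $\|w\| \leq N$ and every $j \in \{1,\ldots,m\}$, one has $\gamma_{i_j} > j$. Set $k_{\min} := \min_{i=1}^{d} k_i$, so that $p_{\max}/p = 1/k_{\min}$. The convention $N = \lfloor \gamma \rfloor$ means $N < \gamma$ strictly, hence $(\gamma_{i_j}-1)p/p_{\max} + k_{i_j} \geq \gamma > N$, which rearranges to $\gamma_{i_j} > (N - k_{i_j})/k_{\min} + 1$. Meanwhile the universal lower bound $k_{i_l} \geq k_{\min}$ for $l \neq j$ forces $\|w\| \geq k_{i_j} + (m-1)k_{\min}$, and combining this with $\|w\| \leq N$ gives $j \leq m \leq (N - k_{i_j})/k_{\min} + 1 < \gamma_{i_j}$. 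In particular $\lfloor \gamma_{i_j} \rfloor \geq j$, so $\sup_y |D^{n} f_{i_j}(y)| \leq 1$ for every $0 \leq n \leq j$.

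Given the claim, I expand $F^{iw} = (f_i \circ f_{i_1} \circ \cdots \circ f_{i_m})(I)$ exactly as in Lemma \ref{Lemma bound on Fw inhomogeneous}: each of the $m$ differential operators $f_{i_j} D$ (for $j = 0, 1, \ldots, m-1$, with $i_0 := i$) selects one function from $\{f_{i_{j+1}}, \ldots, f_{i_m}\}$, producing $m!$ terms in total. In each term, $f_i$ is undifferentiated and each $f_{i_j}$ for $j \in \{1, \ldots, m\}$ appears as $D^{n_j} f_{i_j}$ with $n_j \leq j$ and $\sum_{j} n_j = m$. By the claim, every such factor has sup-norm at most $1$, so each of the $m!$ terms is bounded by $1$ in absolute value. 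Summing gives $|F^{iw}(y)| \leq m! \leq \|w\|!$, where the last inequality uses $\|w\| = \sum_{l} k_{i_l} \geq m$.

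For the Lipschitz bound, the same expansion applies to $DF^w(y) \cdot v$ for any unit vector $v \in \mathbb{R}^n$ playing the role of $f_i$ (a constant vector contributes exactly like an undifferentiated $f_i$ factor), giving $|DF^w(y)| \leq m! \leq \|w\|!$; integrating along the segment from $x$ to $y$ then yields $|F^w(y) - F^w(x)| \leq \|w\|! \, |y - x|$. The main obstacle is really the derivative-order claim: the naive use of $\gamma_i > p_{\max}(1 - k_i/p) + 1$ alone only handles words with $|w| \leq 1$, as in Lemma \ref{Lemma bound on Fw inhomogeneous}, and reaching the much larger range $\|w\| \leq N$ requires combining the strict inequality $\gamma > N$ with the universal lower bound $k_{i_l} \geq k_{\min}$.
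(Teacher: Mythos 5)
Your proof is correct and follows essentially the same route as the paper: your ``derivative-order claim'' $\gamma_{i_j}>j$ is exactly the content of Lemma~\ref{Lemma indices} applied with $\alpha=Np^{-1}$ (whose hypothesis the paper verifies from $N<\gamma$ just as you do), and the subsequent $m!$-term counting for $DF^{w}$ and $F^{iw}$ with each factor bounded by $1$ is the paper's argument verbatim. The only difference is cosmetic: you inline and slightly sharpen the index lemma (getting $\gamma_{i_j}>m$) and make the bookkeeping of derivative orders $n_j\leq j$, $\sum_j n_j=m$ explicit where the paper leaves it implicit.
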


\begin{proof}
Since $\gamma :=\min_{i=1}^{d}\left\{ \left( \gamma _{i}-1\right) pp_{\max
}^{-1}+k_{i}\right\} $ and $N:=\lfloor \gamma \rfloor <\gamma $, we have 
\begin{equation*}
\gamma _{i}>\left( N-k_{i}\right) p^{-1}p_{\max }+1=p_{\max }\left(
Np^{-1}-p_{i}^{-1}\right) +1.
\end{equation*}%
Based on Lemma \ref{Lemma indices} (with $\alpha =Np^{-1}$), when $%
\left\Vert w\right\Vert \leq N$, $F^{w}$ is $Lip\left( \eta \right) $ for
some $\eta >1$. Suppose $w=i_{1}\cdots i_{m}$. Consider $F^{w}=\left(
f_{i_{1}}\circ f_{i_{2}}\circ \cdots \circ f_{i_{m}}\right) \left( I\right) $%
. Since the differential operator $f_{i_{j}}D$ can choose one from $\left\{
f_{i_{j+1}},\cdots ,f_{i_{m}}\right\} $ for $j=1,2,\dots ,m-1$, there are $%
\left( m-1\right) !$ functions in $F^{w}=\left( f_{i_{1}}\circ
f_{i_{2}}\circ \cdots \circ f_{i_{m}}\right) \left( I\right) $. For $DF^{w}$%
, as $D$ choosing one from $\left\{ f_{i_{1}},\cdots ,f_{i_{m}}\right\} $,
there is another factor of $m$, so total $m!$ functions in $DF^{w}$. Since $%
\left\Vert f_{i}\right\Vert _{Lip\left( \gamma _{i}\right) }=1$, each of
these functions is bounded by $1$. Hence, $\left\Vert DF^{w}\right\Vert
_{\infty }\leq m!\leq \left\Vert w\right\Vert !$ and $\left\vert F^{w}\left(
y\right) -F^{w}\left( x\right) \right\vert \leq \left\Vert DF^{w}\right\Vert
_{\infty }\left\vert y-x\right\vert \leq \left\Vert w\right\Vert !\left\vert
y-x\right\vert $. For the second bound, $\sup_{y\in 
\mathbb{R}
^{n}}\left\vert F^{iw}\left( y\right) \right\vert =\sup_{y\in 
\mathbb{R}
^{n}}\left\vert DF^{w}\left( y\right) f_{i}\left( y\right) \right\vert \leq
\left\Vert DF^{w}\right\Vert _{\infty }\left\Vert f_{i}\right\Vert _{\infty
}\leq \left\Vert w\right\Vert !$.
\end{proof}

\begin{lemma}
Suppose $\omega \left( s,t\right) \leq 1$. For $w\in W$, $\left\Vert
w\right\Vert =N-\left[ p\right] ,\cdots ,N$,%
\begin{eqnarray}
&&\left\vert F^{w}\left( y_{t}\right) -F^{w}\left( y_{s}\right) -\sum_{1\leq
\left\Vert l\right\Vert \leq N-\left\Vert w\right\Vert }F^{lw}\left(
y_{s}\right) \left( X_{s,t},l\right) \right\vert  \label{estimate1} \\
&\leq &CN!\frac{\omega \left( s,t\right) ^{\frac{N-\left\Vert w\right\Vert +1%
}{p}}}{\left( \frac{N-\left\Vert w\right\Vert +1}{p}\right) !}  \notag
\end{eqnarray}%
When $\left\Vert w\right\Vert =0,\dots ,N-\left[ p\right] -1$,%
\begin{eqnarray}
&&\left\vert F^{w}\left( y_{t}\right) -F^{w}\left( y_{s}\right) -\sum_{1\leq
\left\Vert l\right\Vert \leq \left[ p\right] }F^{lw}\left( y_{s}\right)
\left( X_{s,t},l\right) \right\vert  \label{estimate2} \\
&\leq &C\left( \left\Vert w\right\Vert +\left[ p\right] \right) !\omega
\left( s,t\right) ^{\frac{\left[ p\right] +1}{p}}  \notag
\end{eqnarray}
\end{lemma}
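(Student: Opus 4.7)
The plan is to compare $F^{w}(y)$ with $F^{w}(y^{s,t})$ and then expand the latter by an ODE Taylor expansion along the geodesic. Set $M:=\min(N-\left\Vert w\right\Vert ,[p])$, so that $M=N-\left\Vert w\right\Vert $ is the natural truncation level in the regime of $\left( \ref{estimate1}\right) $ and $M=[p]$ in the regime of $\left( \ref{estimate2}\right) $. First I would write
\begin{equation*}
F^{w}(y_{t})-F^{w}(y_{s})=\bigl[F^{w}(y_{t})-F^{w}(y_{t}^{s,t})\bigr]+\bigl[F^{w}(y_{t}^{s,t})-F^{w}(y_{s})\bigr]
\end{equation*}
and treat the two brackets separately.

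For the first bracket, Lemma \ref{Lemma Euler estimate} applied to both $y$ and $y^{s,t}$ (which satisfy the same level-one Euler expansion because $S_{\Pi }(x^{s,t})_{s,t}=X_{s,t}$) gives $\left\vert y_{t}-y_{t}^{s,t}\right\vert \leq C\omega (s,t)^{\theta }$ with $\theta =([p]+1)/p$. Combined with the Lipschitz bound $\left\Vert DF^{w}\right\Vert _{\infty }\leq \left\Vert w\right\Vert !$ from Lemma \ref{Lemma bound on Fw}, this contribution is at most $\left\Vert w\right\Vert !\,C\,\omega (s,t)^{([p]+1)/p}$, which is absorbed into the right-hand side of each estimate using $\omega \leq 1$, $([p]+1)/p\geq (M+1)/p$, and boundedness of $((M+1)/p)!$.

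For the second bracket, $F^{w}(y^{s,t})$ satisfies the smooth identity $dF^{w}(y_{r}^{s,t})=\sum_{i}F^{iw}(y_{r}^{s,t})dx_{r}^{s,t,i}$. Iterating this identity via a Picard expansion while stopping each branch as soon as the accumulated word $l$ satisfies $\left\Vert l\right\Vert >M$ yields
\begin{equation*}
F^{w}(y_{t}^{s,t})-F^{w}(y_{s})=\sum_{l\in W,\,1\leq \left\Vert l\right\Vert \leq M}F^{lw}(y_{s})(X_{s,t},l)+R_{M,w},
\end{equation*}
where $(S_{\Pi }(x^{s,t})_{s,t},l)=(X_{s,t},l)$ for $\left\Vert l\right\Vert \leq \lbrack p]$ has been used, and $R_{M,w}$ is a sum of iterated integrals of $F^{lw}(y_{u}^{s,t})$ indexed by boundary words $l$ with $\left\Vert l\right\Vert >M$ and $\left\Vert l^{\prime }\right\Vert \leq M$, where $l^{\prime }$ denotes $l$ with its leftmost letter removed.

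To bound $R_{M,w}$, note that for each such boundary $l$ one has $\left\Vert l^{\prime }w\right\Vert \leq M+\left\Vert w\right\Vert \leq N$, so $F^{lw}=f_{i}(F^{l^{\prime }w})$ for an appropriate $i$ and Lemma \ref{Lemma bound on Fw} gives $\left\Vert F^{lw}\right\Vert _{\infty }\leq \left\Vert l^{\prime }w\right\Vert !$, which is bounded by $N!$ in the regime of $\left( \ref{estimate1}\right) $ and by $(\left\Vert w\right\Vert +[p])!$ in the regime of $\left( \ref{estimate2}\right) $. The iterated integrals along $x^{s,t}$ are then controlled via Lemma \ref{Lemma control of geodesic} combined with a Hara--Hino neo-classical bound, giving $\left\vert (S(x^{s,t})_{s,t},l)\right\vert \leq C\omega (s,t)^{\left\Vert l\right\Vert /p}/(\left\Vert l\right\Vert /p)!$. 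Since each boundary word satisfies $M+1\leq \left\Vert l\right\Vert \leq M+[p]$ and $\omega \leq 1$ with the relevant factorials bounded, each summand is dominated by a constant times $\omega (s,t)^{(M+1)/p}/((M+1)/p)!$, and summing over the finitely many boundary words gives the claimed bound. The main obstacle is the branch-by-branch Picard truncation: organizing the stopping rule by the inhomogeneous norm $\left\Vert l\right\Vert $ rather than the length $\left\vert l\right\vert $ is essential to guarantee that every parent word $l^{\prime }$ satisfies $\left\Vert l^{\prime }w\right\Vert \leq N$ (so Lemma \ref{Lemma bound on Fw} applies to bound $F^{lw}$ by the correct factorial) while simultaneously producing the factorial $((M+1)/p)!$ in the denominator via the neo-classical estimate.
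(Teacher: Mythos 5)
Your proof is correct and follows essentially the same route as the paper: compare $F^{w}(y_{t})$ with $F^{w}(y_{t}^{s,t})$ via Lemma \ref{Lemma Euler estimate}, Lemma \ref{Lemma A} and the Lipschitz bound of Lemma \ref{Lemma bound on Fw}, then Taylor-expand $F^{w}(y^{s,t})$ along the geodesic and control the boundary terms using the sup bounds of Lemma \ref{Lemma bound on Fw} together with Lemma \ref{Lemma control of geodesic}. The only cosmetic difference is that the paper stops the expansion at words with $\left\Vert lw\right\Vert =N$ and bounds $F^{lw}(y_{u}^{s,t})-F^{lw}(y_{s})$ by the Lipschitz estimate and $\left\vert y_{u}^{s,t}-y_{s}\right\vert \leq C\omega (s,t)^{1/p}$, whereas you expand one step further and bound all overshoot terms by the sup norm; both give the power $\omega (s,t)^{(N-\left\Vert w\right\Vert +1)/p}$, and the neo-classical bound you invoke for the geodesic signature is not actually needed since the factorials $\left( \frac{N-\left\Vert w\right\Vert +1}{p}\right) !$ are bounded constants here.
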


\begin{proof}
When $\left\Vert w\right\Vert =N$, based on Lemma \ref{Lemma bound on Fw},
Lemma \ref{Lemma Euler estimate} and $\omega \left( s,t\right) \leq 1$, 
\begin{equation*}
\left\vert F^{w}\left( y_{t}\right) -F^{w}\left( y_{s}\right) \right\vert
\leq N!\left\vert y_{t}-y_{s}\right\vert \leq CN!\omega \left( s,t\right) ^{%
\frac{1}{p_{\max }}}\leq CN!\omega \left( s,t\right) ^{\frac{1}{p}}.
\end{equation*}%
Based on Lemma \ref{Lemma bound on Fw}, Lemma \ref{Lemma Euler estimate},
Lemma \ref{Lemma A}, $\left\Vert x^{s,t,i}\right\Vert _{1-var,\left[ s,t%
\right] }\leq C\omega \left( s,t\right) ^{\frac{1}{p_{i}}}$, $S_{\Pi }\left(
x^{s,t}\right) _{s,t}=X_{s,t}$ and $\theta \geq \left( \left[ p\right]
+1\right) p^{-1}$, since $\omega \left( s,t\right) \leq 1$,%
\begin{equation}
\left\vert F^{w}\left( y_{t}\right) -F^{w}\left( y_{t}^{s,t}\right)
\right\vert \leq \left\Vert w\right\Vert !\left\vert
y_{t}-y_{t}^{s,t}\right\vert \leq C\left\Vert w\right\Vert !\omega \left(
s,t\right) ^{\frac{\left[ p\right] +1}{p}}.  \label{inner6}
\end{equation}%
On the other hand, since $\left\Vert x^{s,t,i}\right\Vert _{1-var,\left[ s,t%
\right] }\leq C\omega \left( s,t\right) ^{\frac{1}{p_{i}}}$ and $\omega
\left( s,t\right) \leq 1$, 
\begin{equation}
\max_{u\in \left[ s,t\right] }\left\vert y_{u}^{s,t}-y_{s}\right\vert \leq
\sum_{i=1}^{d}\left\Vert x^{s,t,i}\right\Vert _{1-var,\left[ s,t\right]
}\leq C\omega \left( s,t\right) ^{\frac{1}{p_{\max }}}\leq C\omega \left(
s,t\right) ^{\frac{1}{p}}.  \label{inner estimate}
\end{equation}%
Then when $\left\Vert w\right\Vert =N-\left[ p\right] ,\cdots ,N-1$, based
on the fundamental theorem of calculus, Lemma \ref{Lemma bound on Fw}, $%
\left( \ref{inner estimate}\right) $ and $\left\Vert x^{s,t,i}\right\Vert
_{1-var,\left[ s,t\right] }\leq C\omega \left( s,t\right) ^{\frac{1}{p_{i}}}$%
, since $\omega \left( s,t\right) \leq 1$, 
\begin{eqnarray}
&&\left\vert F^{w}\left( y_{t}^{s,t}\right) -F^{w}\left( y_{s}\right)
-\sum_{0<\left\Vert l\right\Vert \leq N-\left\Vert w\right\Vert
}F^{lw}\left( y_{s}\right) \left( X_{s,t},l\right) \right\vert
\label{inner7} \\
&\leq &\sum_{\left\Vert i_{1}\cdots i_{k}w\right\Vert
=N}\;\;\idotsint\limits_{s<u_{1}<\cdots <u_{k}<t}\left\vert F^{i_{1}\cdots
i_{k}w}\left( y_{u}^{s,t}\right) -F^{i_{1}\cdots i_{k}w}\left( y_{s}\right)
\right\vert \left\vert dx_{u_{1}}^{s,t,i_{1}}\right\vert \cdots \left\vert
dx_{u_{k}}^{s,t,i_{k}}\right\vert  \notag \\
&&+\sum_{\substack{ \left\Vert i_{2}\cdots i_{k}w\right\Vert <N  \\ %
\left\Vert i_{1}\cdots i_{k}w\right\Vert >N}}\;\;\idotsint\limits_{s<u_{1}<%
\cdots <u_{k}<t}\left\vert F^{i_{1}\cdots i_{k}w}\left( y_{u}^{s,t}\right)
\right\vert \left\vert dx_{u_{1}}^{s,t,i_{1}}\right\vert \cdots \left\vert
dx_{u_{k}}^{s,t,i_{k}}\right\vert  \notag \\
&\leq &CN!\omega \left( s,t\right) ^{\frac{N-\left\Vert w\right\Vert +1}{p}}.
\notag
\end{eqnarray}%
Combining $\left( \ref{inner6}\right) $ and $\left( \ref{inner7}\right) $,
the estimate $\left( \ref{estimate1}\right) $ holds. When $\left\Vert
w\right\Vert =0,\dots ,N-\left[ p\right] -1$, similar arguments apply and $%
\left( \ref{estimate2}\right) $ holds.
\end{proof}

\begin{proof}[Proof of Theorem \protect\ref{Theorem Taylor estimate}]
By an inhomogeneous analogue of \cite[Theorem 2.2.1]{lyons1998differential},
for $s<t$ and $l\in W$, $\left\Vert l\right\Vert =1,2,\dots $%
\begin{equation}
\left\vert \left( X_{s,t},l\right) \right\vert \leq \beta ^{\left\Vert
l\right\Vert -1}\frac{\omega \left( s,t\right) ^{\frac{\left\Vert
l\right\Vert }{p}}}{\left( \frac{\left\Vert l\right\Vert }{p}\right) !}
\label{factorial decay}
\end{equation}

Firstly assume $\omega \left( s,t\right) \leq 1$. Denote $%
Y_{t}^{w}:=F^{w}\left( y_{t}\right) $. Denote $\widetilde{\omega }:=\beta
^{p}d^{p}\omega $. Inductive hypothesis: suppose for $s<t$, $\omega \left(
s,t\right) \leq 1$ and $w\in W$, $\left\Vert w\right\Vert =n+1,\cdots ,N$,
we have 
\begin{equation}
\left\vert Y_{t}^{w}-Y_{s}^{w}-\sum_{1\leq \left\Vert l\right\Vert \leq
N-\left\Vert w\right\Vert }Y_{s}^{lw}\left( X_{s,t},l\right) \right\vert
\leq CN!\frac{\widetilde{\omega }\left( s,t\right) ^{\frac{N-\left\Vert
w\right\Vert +1}{p}}}{\beta \left( \frac{N-\left\Vert w\right\Vert +1}{p}%
\right) !}  \label{inductive hypothesis}
\end{equation}%
which holds when $n=N-\left[ p\right] -1$ based on $\left( \ref{estimate1}%
\right) $.

Fix $w\in W,\left\Vert w\right\Vert =n$, $n\leq N-\left[ p\right] -1$.
Denote 
\begin{equation*}
L_{s,t}=\sum_{1\leq \left\Vert l\right\Vert \leq N-\left\Vert w\right\Vert
}Y_{s}^{lw}\left( X_{s,t},l\right)
\end{equation*}%
Then based on $\left( \ref{estimate2}\right) $, Lemma \ref{Lemma bound on Fw}
and $\left( \ref{factorial decay}\right) $, since $\omega \left( s,t\right)
\leq 1$,%
\begin{eqnarray*}
&&\left\vert Y_{t}^{w}-Y_{s}^{w}-L_{s,t}\right\vert \\
&\leq &\left\vert Y_{t}^{w}-Y_{s}^{w}-\sum_{1\leq \left\Vert l\right\Vert
\leq \left[ p\right] }Y_{s}^{lw}\left( X_{s,t},l\right) \right\vert
+\left\vert \sum_{\left[ p\right] +1\leq \left\Vert l\right\Vert \leq
N-\left\Vert w\right\Vert }Y_{s}^{lw}\left( X_{s,t},l\right) \right\vert \\
&\leq &CN!\omega \left( s,t\right) ^{\frac{\left[ p\right] +1}{p}}.
\end{eqnarray*}%
Hence,%
\begin{equation*}
Y_{t}^{w}-Y_{s}^{w}=\lim_{\left\vert D\right\vert \rightarrow 0,D\subset 
\left[ s,t\right] }\sum_{k,t_{k}\in D}L_{t_{k},t_{k+1}}.
\end{equation*}

For $s<u<t$, by using that $\#\left\{ w\in W|\left\Vert w\right\Vert
=n\right\} \leq d^{n}$, the inductive hypothesis $\left( \ref{inductive
hypothesis}\right) $, the factorial decay $\left( \ref{factorial decay}%
\right) $ and the neo-classical inequality \cite{hara2010fractional}, 
\begin{eqnarray*}
&&\left\vert L_{s,u}+L_{u,t}-L_{s,t}\right\vert \\
&=&\left\vert \sum_{1\leq \left\Vert l\right\Vert \leq N-\left\Vert
w\right\Vert }\left( Y_{u}^{lw}-Y_{s}^{lw}-\sum_{1\leq \left\Vert
l_{1}\right\Vert \leq N-\left\Vert lw\right\Vert }Y_{s}^{l_{1}lw}\left(
X_{s,u},l_{1}\right) \right) \left( X_{u,t},l\right) \right\vert \\
&\leq &\sum_{n=1}^{N-\left\Vert w\right\Vert }Cd^{n}N!\frac{\widetilde{%
\omega }\left( s,u\right) ^{\frac{N-n-\left\Vert w\right\Vert +1}{p}}}{\beta
\left( \frac{N-n-\left\Vert w\right\Vert +1}{p}\right) !}\frac{\beta
^{n-1}\omega \left( u,t\right) ^{\frac{n}{p}}}{\left( \frac{n}{p}\right) !}
\\
&\leq &\sum_{n=1}^{N-\left\Vert w\right\Vert }CN!\frac{\widetilde{\omega }%
\left( s,u\right) ^{\frac{N-n-\left\Vert w\right\Vert +1}{p}}}{\beta \left( 
\frac{N-n-\left\Vert w\right\Vert +1}{p}\right) !}\frac{\widetilde{\omega }%
\left( u,t\right) ^{\frac{n}{p}}}{\beta \left( \frac{n}{p}\right) !}\text{ \
\ \ \ (since }\widetilde{\omega }:=\beta ^{p}d^{p}\omega \text{)} \\
&\leq &CN!\frac{p\,\widetilde{\omega }\left( s,t\right) ^{\frac{N-\left\Vert
w\right\Vert +1}{p}}}{\beta ^{2}\left( \frac{N-\left\Vert w\right\Vert +1}{p}%
\right) !}\text{ \ \ \ \ \ \ \ \ \ (based on neo-classical inequality)}
\end{eqnarray*}%
Then by sequentially removing partitions points as in \cite[Theorem 2.2.1]%
{lyons1998differential}, $\left( \ref{inductive hypothesis}\right) $ holds
when $\left\Vert w\right\Vert =n$ and the induction is complete. Let $w$ be
the empty sequence $\varepsilon $. Then $Y_{t}^{\varepsilon }=y_{t}$, $%
\left\Vert \varepsilon \right\Vert =0$ and the estimate $\left( \ref{main
estimate}\right) $ holds when $\omega \left( s,t\right) \leq 1$.

Suppose $\omega \left( s,t\right) >1$. Based on Lemma \ref{Lemma bound on Fw}%
, when $\left\Vert w\right\Vert =j$, $\sup_{y\in 
\mathbb{R}
^{n}}\left\vert F^{w}\left( y\right) \right\vert \leq \left( j-1\right) !$.
Combined with $\#\left\{ w\in W|\left\Vert w\right\Vert =j\right\} \leq d^{j}
$ and the factorial decay at $\left( \ref{factorial decay}\right) $,%
\begin{equation*}
\left\vert \sum_{w\in W,\left\Vert w\right\Vert =j}F^{w}\left( y_{s}\right)
\left( X_{s,t},w\right) \right\vert \leq \left( j-1\right) !d^{j}\beta ^{j-1}%
\frac{\omega \left( s,t\right) ^{\frac{j}{p}}}{\left( \frac{j}{p}\right) !}
\end{equation*}%
For $j=1,2,\dots $, denote 
\begin{equation*}
a_{j}:=\frac{\left( j-1\right) !}{\left( \frac{j}{p}\right) !}\beta ^{j-1}
\end{equation*}%
Since $\beta >1$ only depends on $p$, by Kershaw's inequality \cite[$\left(
1.3\right) $]{kershaw1983some}, for $\lambda =2^{-1}\left( \beta
^{-1}+1\right) \in \left( 0,1\right) $ there exists $C_{p}>0$ such that $%
a_{j}\leq C_{p}\lambda ^{N+1-j}a_{N+1}$ for $j=1,\dots ,N$ and $N=1,2,\dots $%
. Then%
\begin{equation}
\sum_{j=1}^{N}a_{j}\leq C_{p}\sum_{j=1}^{N}\lambda ^{N+1-j}a_{N+1}\leq
C_{p}^{\prime }a_{N+1}  \label{inner8}
\end{equation}%
On the other hand, since $\lim_{N\rightarrow \infty }a_{N+1}=\infty $ and $%
a_{N+1}>0$, there exists $C_{p}>0$ such that 
\begin{equation}
1\leq C_{p}a_{N+1}  \label{inner9}
\end{equation}%
Based on Lemma \ref{Lemma Euler estimate} and Lemma \ref{Lemma bound on Fw
inhomogeneous}, $\left\vert y_{t}-y_{s}\right\vert \leq C\omega \left(
s,t\right) $ when $\omega \left( s,t\right) >1$. Since $d\geq 1,\omega
\left( s,t\right) ^{\frac{1}{p}}>1$ and $\frac{N+1}{p}\geq \frac{\left[ p%
\right] +1}{p}>1$, combining $\left( \ref{inner8}\right) $ and $\left( \ref%
{inner9}\right) $, we have%
\begin{eqnarray*}
&&\left\vert y_{t}-y_{s}-\sum_{w\in W,1\leq \left\Vert w\right\Vert \leq
N}F^{w}\left( y_{s}\right) \left( X_{s,t},w\right) \right\vert  \\
&\leq &C\omega \left( s,t\right) +\sum_{j=1}^{N}\left( j-1\right)
!d^{j}\beta ^{j-1}\frac{\omega \left( s,t\right) ^{\frac{j}{p}}}{\left( 
\frac{j}{p}\right) !} \\
&\leq &CN!d^{N+1}\beta ^{N}\frac{\omega \left( s,t\right) ^{\frac{N+1}{p}}}{%
\left( \frac{N+1}{p}\right) !}.
\end{eqnarray*}
\end{proof}

\bibliographystyle{unsrt}
\bibliography{acompat,roughpath}

\end{document}